\numberwithin{equation}{section}
\newcommand{\OO}{\mathcal{O}}
\newcommand{\QQ}{\mathbb{Q}}
\newcommand{\ZZ}{\mathbb{Z}}
\newcommand{\FF}{\mathbb{F}}
\newcommand{\End}{\text{End}}
\newcommand{\Pp}{\mathfrak{P}}
\DeclareMathOperator{\Cl}{Cl}
\DeclareMathOperator{\Aut}{Aut}
\DeclareMathOperator{\Nrd}{Nrd}
\DeclareMathOperator{\Trd}{Trd}
\DeclareMathOperator{\discrd}{discrd}
\DeclareMathOperator{\disc}{disc}
\DeclareMathOperator{\codiff}{codiff}
\DeclareMathOperator{\Ell}{Ell}
\DeclareMathOperator{\Emb}{Emb}
\newtheorem*{prob}{Problem}
\newtheorem{thm}{Theorem}[section]
\newtheorem{prop}[thm]{Proposition}
\newtheorem{lem}[thm]{Lemma}
\theoremstyle{definition}
\newtheorem{defn}[thm]{Definition}
\newtheorem{rmk}[thm]{Remark}
\newtheorem{alg}[thm]{Algorithm}
\newtheorem{conjecture}[thm]{Conjecture}
\newtheorem{heuristic}[thm]{Heuristic}
\newtheorem*{rmk*}{Remark}
\newtheorem*{ex*}{Example}
\title[Computing endomorphism rings of supersingular elliptic
curves]{Computing endomorphism rings of supersingular elliptic curves
  and connections to pathfinding in isogeny graphs}
\author[Eisentr\"ager, Hallgren, Leonardi, Morrison, and Park]{Kirsten
 Eisentr\"ager, Sean Hallgren, Chris Leonardi, Travis Morrison, and
 Jennifer Park}
\address{Kirsten Eisentr\"ager,
Department of Mathematics,
The Pennsylvania State University,
109 McAllister Building,
University Park, PA 16802, USA}
\email{eisentra@math.psu.edu}
\address{Sean Hallgren,     Department of Computer Science and Engineering,
    The Pennsylvania State University,
    350W Westgate Building,
    University Park, PA  16802, USA}
    \email{hallgren@cse.psu.edu}
 \address{Chris Leonardi, The University of Waterloo, 200 University Ave W, Waterloo, ON N2L 3G1, Canada}
\email{cfoleona@uwaterloo.ca}
 \address{Travis Morrison, Institute for Quantum Computing, The University of Waterloo, 200 University Ave W, Waterloo, ON N2L 3G1, Canada}
  \email{travis.morrison@uwaterloo.ca}
 \address{Jennifer Park, The Ohio State University, 231 W. 18th Ave.,
   MW 512 Columbus, OH 43210, USA}
 \email{park.270@osu.edu}
 \thanks{K.E.\ was partially supported by National Science Foundation
   award CNS-1617802 and a Vannevar Bush Faculty Fellowship from the
   US Department of Defense. S.H.\ was partially supported by National
   Science Foundation awards CCF-1618287 and CNS-1617802, and by a
   Vannevar Bush Faculty Fellowship from the US Department of Defense.
   T.M.\ was supported by funding from the Natural Sciences and
   Engineering Research Council of Canada, the Canada First Research
   Excellence Fund, CryptoWorks21, Public Works and Government
   Services Canada, and the Royal Bank of Canada. J.P.\ was partially
   supported by National Science Foundation award DMS-1902199. This
   work was done in part while K.E.\ and S.H.\ were visiting the Simons Institute for the Theory of Computing.}
\begin{document}

\maketitle

 \begin{abstract}
   Computing endomorphism rings of supersingular elliptic cur\-ves is
   an important problem in computational number theory, and it is also
   closely connected to the security of some of the recently proposed
   isogeny-based cryptosystems.  In this paper we give a new
   algorithm for computing the endomorphism ring of a
   supersingular elliptic curve $E$ defined over $\FF_{p^2}$ that runs, under certain
   heuristics, in time $O((\log p)^2p^{1/2})$.  The algorithm works by
   first finding two cycles of a certain form in the supersingular
   $\ell$-isogeny graph $G(p,\ell)$, generating an order
   $\Lambda \subseteq \End(E)$. Then all maximal orders containing
   $\Lambda$ are computed, extending work of Voight~\cite{V2013}. The
   final step is to determine which of these maximal orders is the
   endomorphism ring. As part of the cycle finding algorithm, we give
   a lower bound on the set of all $j$-invariants $j$ that are
   adjacent to $j^p$ in $G(p,\ell)$, answering a question
   in~\cite{ACLLNSS}.

   We also give a new polynomial-time reduction from computing
   $\End(E)$ to pathfinding in the $\ell$-isogeny graph which is
   simpler in several ways than previous ones.
  We show that this reduction leads to another algorithm for computing
  endomorphism rings which runs in time $\tilde O(p^{1/2})$.  This
  allows us to break the second preimage resistance of a hash function
  in the family constructed in~\cite{CGL2009}.
\end{abstract}

\section{Introduction}
Computing the endomorphism ring of an elliptic curve defined over a
finite field is a fundamental problem in computational arithmetic
geometry. For ordinary elliptic curves the fastest algorithm is due to
Bisson and Sutherland \cite{BS2011} who gave a subexponential time
algorithm to solve this problem.  No subexponential time algorithm is
known for general supersingular elliptic curves.

Computing endomorphism rings of supersingular elliptic curves has
emerged as a central problem for isogeny-based cryptography. The first
cryptographic application
of
isogenies between supersingular elliptic curves was
the hash function in~\cite{CGL2009}.  An
efficient algorithm for computing the endomorphism ring of a
supersingular elliptic curve would, under certain
assumptions, completely break this hash function  and also
SIKE~\cite{JDf11,SIKE}. It would also have a major impact on the security
of CSIDH~\cite{CSIDH}.

Computing the endomorphism ring of a supersingular elliptic curve $E$
was first studied by Kohel~\cite[Theorem 75]{Koh96}, who gave an
approach for generating a subring of finite index of the endomorphism
ring $\End(E)$. The algorithm was based on finding cycles in the
$\ell$-isogeny graph of supersingular elliptic curves in
characteristic $p$, and the running
time of the probabilistic algorithm was $O(p^{1 +
  \varepsilon})$. In~\cite{GPS} it is argued that heuristically one
expects $O(\log p)$ calls to a cycle finding algorithm until the
cycles generate $\End(E)$. An algorithm for computing cycles with
complexity $\tilde{O}(p^{1/2})$ and polynomial storage is given by
Delfs and Galbraith~\cite{DG16}.

One can also compute $\End(E)$ using an isogeny $\phi: \tilde{E}\to E$, where
$\tilde{E}$ is an elliptic curve with known endomorphism ring. McMurdy was
the first to compute $\End(E)$ via such an approach~\cite{M2014}, but
did not determine its complexity. In~\cite{GPS} a polynomial-time
reduction from computing $\End(E)$ to finding
an isogeny $\phi$ of powersmooth degree was given assuming some
heuristics, while~\cite{dFMPS} used an isogeny $\phi$ of $\ell$-power
degree.

In this paper we give a new algorithm for computing the endomorphism
ring of a supersingular elliptic curve $E$: first we compute two
cycles through $E$ in the supersingular $\ell$-isogeny graph that
generate an order $\Lambda$ in $\End(E)$. We show that this order
will be a Bass order with constant probability, assuming that the discriminants of the two cycles are
random in a certain way. Then we compute all maximal orders
that contain the Bass order $\Lambda$ by first solving
the problem locally, showing how to efficiently compute all
maximal superorders of $\Lambda$ when $\Lambda$ is local and
Bass. This extends work of Voight~\cite[Theorem
7.14]{V2013}. The main property of local Bass orders used here is that
there are at most $e+1$ maximal orders containing a local Bass order
$\Lambda \otimes \ZZ_q$, where $e=v_q(\discrd(\Lambda))$ is the
valuation of the reduced discriminant of $\Lambda$ (see~\cite{Brz83}).
To solve the global case, we use the local data and a local-global principle
for quaternionic orders. To bound the running time in this step, we prove that
the number of maximal global orders containing $\Lambda$ is $O(p^{\epsilon})$ for any
$\epsilon > 0$ when the size of $\Lambda$ is polynomial in $\log p$
and $\discrd(\Lambda)$ is square-free.
We conjecture that this bound also holds when $\discrd(\Lambda)$ is
not square-free.  Finally, as we compute each global maximal order, we
check if it is isomorphic to $\End(E)$.
As part of the analysis of the cycle finding algorithm,
we give a lower bound on the size of the set of all $j$-invariants $j$
that are adjacent to $j^p$ in $G(p,\ell)$, answering the lower-bound
part of Question~3 in~\cite{ACLLNSS}.

Both the algorithm for generating the suborder of $\End(E)$ and the
algorithm for computing the maximal orders containing a given order
are new.  However, our overall algorithm is still exponential: the two
cycles are found in time $O((\log p)^2p^{1/2})$, and the overall
algorithm has the same running time, assuming several heuristics.
This saves at least a factor of $\log p$ versus the previous approach
in~\cite{GPS} that finds cycles in $G(p,\ell)$ until they generate all
of $\End(E)$. This is because with that approach one expects to
compute $O(\log p)$ cycles, while our algorithm for the endomorphism
ring computes just one pair of cycles and succeeds with constant probability, assuming that the above heuristic about the discriminants of
cycles holds.  Also, our cycle finding algorithm requires only
polynomial storage, while achieving the same run time as a generic
collision-finding algorithm, which requires exponential storage.

    In the last section of the paper we give a new polynomial-time
    reduction from computing $\End(E)$ to path\-finding in the
    $\ell$-isogeny graph which is simpler in several ways than
    previous ones. For this we need to assume GRH and the heuristics of~\cite{GPS}.
We observe that this
reduction, together with the algorithms in~\cite{KLPT,DG16,GPS,EHLMP}
gives an algorithm for pathfinding in $G(p,\ell)$ that runs in time
$O((\log p)^2p^{1/2})$ and requires polynomial storage, assuming the
heuristics needed in these algorithms hold.

%%%%%%%%%%%%%%
%%%%%%%%%%
%%%%%
%%%%%%
%%%

The paper is organized as follows. Section~\ref{sec:background}
gives some necessary background.  In Section~\ref{S: cycles} we
give an algorithm for computing cycles in the $\ell$-isogeny graph
$G(p, \ell)$ so that the corresponding endomorphisms generate an order
in the endomorphism ring of the associated elliptic curve. In
Section~\ref{section: local superorders} we show how to compute all
maximal local orders containing a given $\ZZ_q$-order $\Lambda$. In
Section~\ref{section: EndE from Lambda} we construct global orders
from these local orders and compute $\End(E)$.  In
Section~\ref{section: reduction} we give a reduction from the
endomorphism ring problem to the problem of computing $\ell$-power
isogenies in $G(p,\ell)$ that is then used to attack the second
preimage resistance of the hash function in~\cite{CGL2009}.

\section{Background on Elliptic Curves and Quaternion Algebras}\label{sec:background}
For the definition of an elliptic curve, its $j$-invariant, isogenies
of elliptic curves, their degrees, and the
dual isogeny see \cite{AEC}.
\begin{comment}
\subsection{Elliptic Curves and Isogenies}
By an \textit{elliptic curve} $E$ over a field $k$ of characteristic $p>3$ we
mean a curve with equation $E:y^2=x^3+Ax+B$ for some $A,B\in k$ such that $4A^3 + 27B^2 \not= 0$. In this case, the
$j$-invariant of $E$ is
$j(E)\coloneqq \frac{256 \cdot 27 \cdot A^3}{4A^3 + 27B^2}$. The $j$-invariant
of an elliptic curve determines the curve up to
isomorphism over the algebraic closure of $k$.

Let $E_1$ and $E_2$ be elliptic curves defined over a field $k$ of
positive characteristic $p$. An {\em isogeny}
$\varphi \colon E_1 \to E_2$ defined over $k$ is a non-constant
morphism defined over $k$ which is also a group homomorphism from
$E_1(k)$ to $E_2(k)$. %~\cite[III.4]{AEC}.
The \textit{degree} of an
isogeny is its degree as a rational map. When the degree $d$ of the
isogeny $\varphi$ is coprime to $p$,
the kernel of $\varphi$ is a subgroup of the points on $E_1$ of size
$d$.  If $\psi: E_1\to E_2$ is an isogeny of degree $d$, the {\em dual
  isogeny} of $\psi$ is the unique isogeny
$\widehat{\psi}: E_2\to E_1$ satisfying $\psi\widehat{\psi}=[d]$,
where $[d]:E_2\to E_2$ is the multiplication-by-$d$ map.

Given an elliptic curve $E$ and a finite subgroup $H$ of $E$, there
is, up to isomorphism a unique isogeny $\varphi: E\to E'$ having
kernel $H$. %~(see \cite[III.4.12]{AEC}).
V\'elu's formula %~\cite{Vel71}
can be used to compute equations for the
isogeny from its kernel.
\end{comment}

\subsection{Endomorphism rings, supersingular curves, $\ell$-power
  isogenies}
Let $E$ be an elliptic curve defined over a finite field $\FF_q$. An isogeny of $E$  to itself is called an
\textit{endomorphism} of $E$.
The set of endomorphisms of $E$ defined over $\overline{\mathbb{F}}_q$
together with the zero map is called the endomorphism ring of $E$, and
is denoted by $\End(E)$.

If the endomorphism ring of $E$ is non-commutative, $E$ is called
a {\em supersingular elliptic curve}. Otherwise we call $E$ {\em
  ordinary}. Every supersingular elliptic curve over a field of
characteristic $p$ has a model that is defined over $\mathbb{F}_{p^2}$
because the $j$-invariant of such a curve is in $\mathbb{F}_{p^2}$.

Let $E,E'$ be two supersingular elliptic curves defined over
$\FF_{p^2}$. For each prime $\ell \neq p$, $E$ and
$E'$ are connected by a chain of isogenies of degree
$\ell$. %~\cite{Mes86}.
By~\cite[Theorem 79]{Koh96}, $E$ and $E'$ can be
connected by $m$ isogenies of degree $\ell$, where $m=O(\log p)$.  %So
For $\ell$ a prime different from $p$, the
        \textit{supersingular $\ell$-isogeny graph in characteristic $p$} is
        the multi-graph $G(p,\ell)$ whose vertex set is
        \vspace{-.2em}
        \[ V=V(G(p,\ell)) = \{ j \in \mathbb{F}_{p^2}: j=j(E)
  \text{
    for $E$ supersingular}\},\]
 and the number of directed edges from $j$ to $j'$ is equal to the
multiplicity of $j'$ as a root of $\Phi_{\ell}(j,Y)$. Here, given a prime $\ell$, $\Phi_{\ell}(X,Y)\in
\mathbb{Z}[X,Y]$ is the {\em modular polynomial}. This polynomial
has the
property that $\Phi_{\ell}(j,j') =0$ for $j,j' \in \mathbb{F}_q$ and $q=p^r$ if
and only if there exist elliptic curves $E(j),E(j')$ defined over
$\mathbb{F}_q$ with $j$-invariants $j,j'$ such that there is a
separable $\ell$-isogeny from $E(j)$ to $E(j')$.

\subsection{Quaternion Algebras, orders and sizes of orders}\label{quatalg}
For $a, b \in \QQ^{\times}$, let $H(a,b)$ denote the quaternion
algebra over $\QQ$ with basis $1,i,j,ij$ such that $i^2=a$, $j^2=b$
and $ij=-ji$.  That is,
$H(a,b) = \QQ + \QQ \,i + \QQ \,j + \QQ \,i j.$ Any quaternion algebra
over $\QQ$ can be written in this form.
There is a {\em canonical involution} on $H(a,b)$ which sends an
element $\alpha~=~a_1+a_2i+a_3j+a_4ij$ to $\overline{\alpha}\coloneqq a_1-a_2i-a_3j-a_4ij$.
Define the {\em reduced trace} of an element $\alpha$ as above to
be $\Trd(\alpha) = \alpha + \overline{\alpha}= 2a_1,$ and the {\em reduced
norm} to be $\Nrd(\alpha) = \alpha \overline{\alpha}= a_1^2 - aa_2^2 -ba_3^2 + aba_4^2.$

A subset $I\subseteq H(a,b)$ is a {\em lattice} if $I$ is
finitely generated as a $\ZZ$-module and  $I\otimes
\QQ\simeq H(a,b)$. If $I\subseteq H(a,b)$ is a lattice, the {\em reduced norm of $I$}, denoted
$\Nrd(I)$, is the positive generator of the fractional $\ZZ$-ideal
generated by $\{\Nrd(\alpha): \alpha \in I\}$. An {\em order} $\OO$ of
$H(a,b)$ is a subring of $H(a,b)$ which is also a lattice,
and if $\OO$ is not properly
contained in any other order, we call it a {\em maximal order}.
We call an order $\OO \subseteq H(a,b)$ {\em $q$-maximal} if $\OO \otimes
\ZZ_q$ is a maximal order in $H(a,b) \otimes \ZZ_q$.

We define $ \OO_R(I)\coloneqq \{x\in H(a,b): Ix\subseteq I\} $ to be
the {\em right order of the lattice $I$}, and we similarly define its
\textit{left order} $\OO_L(I)$. If $\OO$ is a maximal order in
$H(a,b)$ and $I\subseteq \OO$ is a left ideal of $\OO$, then
$\OO_R(I)$ is also a maximal order. Here a {\em left ideal of $\OO$} is an
additive subgroup of $\OO$ that is closed under scalar multiplication
on the left. In our setting, a lattice or an
order is always specified by a basis. The {\em size} of a lattice or
an order $\Lambda$ specified by a basis $\mathcal{B}$ in a quaternion
algebra $B$ is the number of bits needed to write down the
coefficients of the basis $\mathcal{B}$ plus the
size of $B$, which is specified by a basis and a multiplication
table. In the following we write $size(\Lambda)$ for simplicity even
though the size depends on the basis chosen to represent $\Lambda$.
We denote by $B_{p, \infty}$ the unique quaternion algebra over
$\mathbb{Q}$ that is ramified exactly at $p$ and $\infty$, and this
algebra has a standard basis~\cite[Prop.\ 5.1]{Piz80}. The
endomorphism ring of a supersingular elliptic curve is isomorphic to a
maximal order in $B_{p,\infty}$.

\subsection{Bass, Eichler, and Gorenstein orders in quaternion
  algebras; discriminants and reduced discriminants.}
\label{discriminants}
Let $B$
be a quaternion algebra over $\QQ$.  We define the {\em discriminant of
  $B$}, denoted $\disc B$, to be the product of p
rimes that ramify in $B$; then
$\disc B$ is a squarefree positive integer. If $\mathcal{O}
\subset B$ is an order, we define the {\em discriminant of
  $\mathcal{O}$} to be
$\disc(\mathcal{O}) \coloneqq  |\det(\Trd(\alpha_i \alpha_j))_{i,j}| \in \mathbb{Z}>0$,
where $\alpha_1, \dots, \alpha_4$ is a $\mathbb{Z}$-basis for
$\mathcal{O}$~\cite[\S 15.2]{Voight}.

The discriminant of an order is always a square, and the {\em reduced discriminant}
$\discrd(\mathcal{O})$ is the positive integer square root so that
$\discrd(\mathcal{O})^2 = \disc(\mathcal{O})$~\cite[\S 15.4]{Voight}.
The discriminant of an order measures how far the order is
from being a maximal order. The order $\mathcal{O}$ is
maximal if and only if $\discrd(\mathcal{O}) = \disc B$~\cite[Theorem 23.2.9]{Voight}.
Associated to a quaternion algebra $B$ over $\QQ$ there is a \textit{discriminant
form} $\Delta: B \to \QQ$, defined by $\Delta(\alpha)= \Trd(\alpha)^2 -4
\Nrd(\alpha)$, and we refer to $\Delta(\alpha)$ as the
\textit{discriminant} of $\alpha$.
Now let $\mathcal{O} \subset B$ be a $\ZZ$-order.
We say that $\mathcal{O}$ is an {\em Eichler order} if $\mathcal{O} \subseteq
B$ is the intersection of two (not necessarily distinct) maximal orders.
The \textit{codifferent} of an order is
defined as
$\codiff(\mathcal{O)} = \{ \alpha \in B: \Trd(\alpha \mathcal{O})
\subseteq \ZZ\}.$ Following \cite[Definition 24.2.1]{Voight}, we say
that $\mathcal{O}$ is {\em Gorenstein} if the lattice
codiff$(\mathcal{O})$ is invertible as a lattice as in
\cite[Definition 16.5.1]{Voight}. An order $\mathcal{O}$ is {\em Bass}
if every superorder $\mathcal{O}' \supseteq \mathcal{O}$ is
Gorenstein. An order is {\em basic} if it contains a commutative,
quadratic subalgebra $R$ such that $R$ is integrally closed in
$\QQ R$~\cite[\S 24.5]{Voight}.  Given an order $\Lambda$, its {\em radical idealizer}
$\Lambda^{\natural}$ is defined as
$\Lambda^{\natural} = \mathcal{O}_R(\operatorname{rad}\Lambda)$, where
$\operatorname{rad}\Lambda$ is the Jacobson radical of the ring $\Lambda$.  When
$B$ is a quaternion algebra over $\QQ_p$ and $\OO$ is a $\ZZ_p$-order
in $B$, we similarly define lattices, ideals, and
orders in $B$.

\section{Computing an order in the endomorphism
  ring of a supersingular elliptic curve}\label{section:
  compute end}
  \label{S: cycles}

\subsection{Computing cycles in $G(p,\ell)$}\label{section: cycles}

Fix a supersingular elliptic curve $E_0$ defined over $\FF_{p^2}$ with
$j$-invariant $j_0$.
In this section we describe and analyze an algorithm for computing two
cycles through $j_0$ in $G(p,\ell)$ that generate an order in
$\End(E_0)$.

We will first show how to construct two distinct paths from $j_0$ to
$j_0^p$.  Given two such paths $P$ and $P'$, then first traversing
through $P$ and then traversing through $P'$ in reverse gives a cycle
through $j_0$. This uses the fact that if $j$ is adjacent to $j'$,
then $j^p$ is adjacent to $(j')^p$.

Now let $P_1$ be a path of length $k$ from $j_0$ to some other vertex
$j_k$ in $G(p, \ell)$. Denote the not necessarily distinct vertices on
the path by $j_0, j_1, \dots, j_k$ and assume that $j_k$ is adjacent
to $j_k^p \in G(p,\ell)$. Let
$P^p_1 = [j_k,j_k^p,j_{k-1}^p,\dots,j_1^p,j_0^p]$.  The concatenation
$P\coloneqq P_1 P^p_1$ is a path from $j_0$ to $j_0^p$.  Such paths
were also considered in~\cite [Section 7]{CGL2009}.
%We give an
%algorithm for computing such paths so that the resulting cycles
%generate an order and analyze its running time.

If $j_0 = j_0^p$, then $P$ is already a cycle.  Otherwise, we repeat
this process to find another path $P'\coloneqq P_2 P^p_2$ that passes
through at least one vertex not in $P$.  Concatenating $P$ and $P'$
(in reverse order) gives a cycle starting and ending at $j_0$; this
corresponds to an endomorphism of $E$.  We will need the notion of a
path/cycle with no {\em backtracking} and {\em trimming a path/cycle}
to remove backtracking.

\begin{defn} Suppose $e_j, e_{j'}$ are edges in $G(p, \ell)$ that
  correspond to $\ell$-isogenies $\phi_j: E(j) \to E(j')$ and
  $\phi_{j'}: E(j') \to E(j)$ between curves $E(j)$ and $E(j')$ with
  $j$-invariants $j,j'$. We say that $e_j$ is {\em dual} to $ e_{j'}$
  if up to isomorphism $\phi_{j'}$ equals the dual isogeny
  $\hat \phi_j$ of $\phi$. That is $\phi_{j'} = \alpha \hat{\phi}_j$,
  where $\alpha \in \Aut(E(j))$. We say that a path or cycle with a
  specified start vertex $j_0$, following edges $(e_1, \dots, e_k)$
  and ending at vertex $j_k$ {\em has no backtracking} if $e_{i+1}$ is
  not dual to $e_i$ for $i =1, \dots, k-1$.
\end{defn}

In our definition, a cycle has a specified start vertex
$j_0$. According to our definition, if the first edge $e_1$ and the
last edge $e_k$ in such a cycle are dual to each other, it is not
considered backtracking.

\begin{defn}\label{def: trimming}
  Given a path $(e_1,\ldots, e_k)$ from $j_0$ to $j_k$ (with $j_0\neq
  j_k$) or a cycle with
  specified start vertex $j_0 = j_k$, define {\em trimming} as the
  process of iteratively removing pairs of adjacent dual edges until
  none are left.
\end{defn}

One can show that given a path $P$ from $j_0$ to $j_k$ with
$j_0\neq j_k$, or a cycle $C$ with start vertex $j_0 = j_k$, the
trimmed versions $\tilde{P}$ or $\tilde{C}$ may result in a smaller
set of vertices.  The vertices $j_0$ and $j_k$ will still be there in
$\tilde P$, and the only way that $j_0$ and $j_k$ may disappear from
$\tilde C$ is if the whole cycle gets removed.

\begin{defn}
  Given a path $P$ in $G_{p,\ell}$ from $j_0$ to $j_k$, we define
  $P^R$ to be the path $P$ traversed in reverse order, from $j_k$ to
  $j_0$, using the dual isogenies.
\end{defn}

  Let $S^p  \coloneqq \{j \in \FF_{p^2}\colon j \text{ is
    supersingular and $j$ is adjacent to $j^p$ in  $G(p,\ell)$}\}$.

  We can now give the algorithm to find cycle pairs:

\begin{alg}\label{alg: find end} Finding cycle pairs for prime $\ell$\\
  \noindent Input: prime $p \neq \ell$
   and a supersingular
  $j$-invariant $j_0\in\FF_{p^2}$.\\
  \noindent Output: two cycles in $G(p, \ell)$ through $j_0$.
  \begin{enumerate}
  \item \label{alg:walk} Perform $N=\Theta(\sqrt{p}\log p\log \log p)$ random walks of
    length\\ $k=\Theta(\log(p^{3/4}(\log\log p)^{1/2}))$ starting at $j_0$ and select a
    walk that hits a vertex $j_k \in S^p$, i.e.\ such that $j_k$ is $\ell$-isogenous
    to $j_k^p$; let
    $P_1$ denote the path from $j_0$ to $j_k$.
  \item Let $P_1^p$ be the path given by
    $j_k,j_k^p, j_{k-1}^p, \ldots, j_0^p$.
\item Let $P$ denote the path from $j_0$ to $j_0^p$ given as the concatenation of
  $P_1$ and $P^p_1$.  Remove any self-dual self-loops and trim $P_1P_1^p$.
\item If $j_0\in \FF_p$ then $P_1P_1^p$ is a cycle through
  $j_0$.
\item If $j_0 \in \FF_{p^2}-\FF_p$ repeat steps (\ref{alg:walk})-(3)
  again to find another path $P' =P_2P_2^p$ from $j_0$ to $j_0^p$, then
  $P(P')^R$ is a cycle. Remove any self-dual self-loops and trim the cycle.
\item Repeat Steps (1)-(5) a second time to get a second cycle.
\end{enumerate}
\end{alg}

\begin{rmk}\label{variant}
  Instead of searching for a vertex $j$ in Step (1) such that $j$ is
  adjacent to $j^p$, one could also search for a vertex $j \in
  \mathbb{F}_p$, i.e.\ $j=j^p$ or a vertex $j$ whose distance from
  $j^p$ in the graph is bounded by some fixed integer $B$. Our
  algorithm that searches for a vertex such that $j$ is adjacent to
  $j^p$ was easier to analyze because there were fewer cases to
  consider in which the trimmed cycles would not generate an order.
  \end{rmk}

To analyze the running time of
Algorithm~\ref{alg: find end}, we will use the mixing properties in
the Ramanujan graph $G(p,\ell)$.  This is captured in the following
proposition, which is an extension of~\cite[Lemma 2.1]{JMV09}
in the case that $G(p,\ell)$ is not regular or undirected (that is, when
$p\not\equiv 1 \pmod{12})$.

\begin{prop}\label{mixing}
  Let $p>3$ be prime, and let $\ell\not= p$ also be a prime. Let $S$ be any
  subset of the vertices of $G(p,\ell)$ not containing $0$ or $1728$.
  Then a random walk of length at least
  \[
  t =
  \frac{\log\left(\frac{p}{6|S|^{1/2}}\right)}{\log\left(\frac{\ell + 1}{2\sqrt{\ell}}\right)}
  \]
  will land in $S$ with probability at least $\frac{6|S|}{p}$.
  \end{prop}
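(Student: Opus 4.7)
The plan is to deduce the proposition from the fact that $G(p,\ell)$ is a Ramanujan graph together with the standard expander mixing lemma. This is essentially \cite[Lemma 2.1]{JMV09}; the new content is that $G(p,\ell)$ is neither undirected nor regular when $p \not\equiv 1 \pmod{12}$, because of extra automorphisms at $j = 0$ (when $p \equiv 2 \pmod{3}$) and $j = 1728$ (when $p \equiv 3 \pmod{4}$), which is precisely why $S$ is forbidden from containing these two vertices in the hypothesis.

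First I would pass to a self-adjoint model by weighting each supersingular vertex $j$ by $w_j = 1/|\Aut(E(j))|$, so $w_j = 1/2$ outside $\{0,1728\}$ and $w_j \in \{1/4,1/6\}$ on it. The adjacency operator of $G(p,\ell)$ is self-adjoint with respect to these weights, and Eichler's identification of the Brandt matrix with a Hecke operator on weight-two modular forms, combined with Ramanujan--Petersson, gives the Ramanujan bound: the Perron eigenvalue is $\ell+1$ and all other eigenvalues lie in $[-2\sqrt{\ell},2\sqrt{\ell}]$. The stationary distribution of the corresponding reversible chain is $\pi(j) = w_j/W$ with $W = (p-1)/24$ by the Eichler mass formula, so $\pi(j) = 12/(p-1)$ for generic $j$ and $\pi(\{0,1728\}) = O(1/p)$. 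Plugging this into the classical spectral bound
\[
\Pr[X_t \in S \mid X_0 = v] \;\geq\; \pi(S) \;-\; \lambda^t \sqrt{\pi(S)/\pi(v)},
\]
with $\lambda = 2\sqrt{\ell}/(\ell+1)$, and using $\pi(S) \geq 12|S|/p$ together with an $O(1)$ bound on $1/\pi(v)$ (up to the automorphism factors), reduces the desired inequality $\Pr[X_t \in S] \geq 6|S|/p$ to $\lambda^t \leq 6|S|^{1/2}/p$, which rearranges exactly to the stated lower bound on $t$.

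The main obstacle is matching the ``random walk'' in the statement --- pick uniformly among $\ell+1$ outgoing edges --- to the weighted reversible chain above, since the two walks disagree at $j = 0$ and $j = 1728$ where the $(\ell+1)$-to-$1$ correspondence between cyclic order-$\ell$ subgroups and edges in the Brandt multigraph breaks down. Since $\{0,1728\}$ consists of only $O(1)$ vertices and the target $S$ avoids them, this discrepancy contributes a correction of lower order that is absorbed into the constant $6$ in the bound; this is the bookkeeping that extends \cite[Lemma 2.1]{JMV09} from the regular case. Beyond this, everything is standard and the only genuine analytic input is the Ramanujan bound for $G(p,\ell)$.
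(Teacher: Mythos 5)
Your approach is exactly the one the paper intends: the paper's entire justification is the one-line remark that the eigenvalues of $G(p,\ell)$ satisfy the Ramanujan bound and that the statement extends \cite[Lemma 2.1]{JMV09} to the non-regular, directed case, which is precisely the spectral argument (automorphism weights $1/|\Aut(E(j))|$, the Eichler mass formula giving $\pi(j)\approx 12/p$, self-adjointness of the Brandt operator, and the $2\sqrt{\ell}$ eigenvalue bound) that you carry out. Your write-up is more detailed than the paper's, and your constants match those obtained by substituting $N\approx p/12$, $k=\ell+1$, $c=2\sqrt{\ell}$ into the regular-graph lemma of \cite{JMV09}.
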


One can prove this since the eigenvalues for the adjacency matrix of $G(p,\ell)$ satisfy the Ramanujan bound. This allows us to prove
 the following theorem.

\begin{thm}\label{thm: compute order}
  Let $\ell,p$ be primes such that $\ell<p/4$. Under GRH,
  Algorithm~\ref{alg: find end} computes two cycles in $G(p, \ell)$
  through $j_0$ that generate an order in the endomorphism ring of
  $E_0$ in time $O(\sqrt{p} \; (\log p)^2)$, as long as the two cycles
  do not pass through the vertices 0 or 1728, with probability
  $1-O(1/p)$. The algorithm requires $\textrm{poly} \log p$ space.
\end{thm}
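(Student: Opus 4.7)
The plan is to split the proof into three pieces: a runtime and space analysis, a bound on the probability that the two random-walk searches produce valid paths $P_1, P_2$, and an argument that the two resulting endomorphisms of $E_0$ do not commute, so that they generate a rank-four $\ZZ$-order in $B_{p,\infty}$.

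For the runtime, I would observe that each of the $N = \Theta(\sqrt{p}\log p \log\log p)$ walks has length $k = \Theta(\log p)$, and that each step consists of picking a uniform root of $\Phi_\ell(j, Y) \in \FF_{p^2}[Y]$, costing $\mathrm{poly}(\log p)$ bit operations for $\ell$ in the stated range; the test whether $j_k \in S^p$ reduces to evaluating $\Phi_\ell(j_k, j_k^p)$, also $\mathrm{poly}(\log p)$. Summed over all walks this gives $O(\sqrt{p}(\log p)^2)$, and the second pass in Step~(6) only doubles the bound. Building $P_1 P_1^p$, forming $P(P')^R$, and running the trimming of Definition~\ref{def: trimming} each cost $\mathrm{poly}(\log p)$ per cycle, and only the current walk needs to be kept in memory, giving the $\mathrm{poly}\log p$ space bound.

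For the success probability of the walks, I would invoke the lower bound on $|S^p|$ proved earlier in this section (which settles the lower-bound portion of Question~3 of \cite{ACLLNSS}); matching the parameter $k$ of Algorithm~\ref{alg: find end} to the mixing threshold of Proposition~\ref{mixing} indicates the implicit bound $|S^p| = \Omega(\sqrt{p}/\log\log p)$. Applied to $S = S^p \setminus \{0, 1728\}$, Proposition~\ref{mixing} then gives that each walk lands in $S$ with probability at least $6|S|/p = \Omega(1/(\sqrt{p}\log\log p))$. The probability that none of the $N$ independent walks hits $S$ is therefore at most $\exp(-\Omega(\log p)) = O(1/p)$ after calibrating the constant hidden in $N$, and a union bound over the two cycle searches preserves this.

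The hard part will be to show that the two resulting endomorphisms $\alpha_1, \alpha_2 \in \End(E_0)$ generate a rank-four order in $B_{p,\infty}$, equivalently that they do not commute, i.e., that they do not both lie in a single imaginary quadratic subfield $K \subset B_{p,\infty}$. After trimming, one has $\alpha_1 \notin \ZZ$ with probability $1 - O(1/p)$, and then $K_1 := \QQ(\alpha_1)$ is a well-defined imaginary quadratic field; the bad event is $\alpha_2 \in K_1$. The plan is to bound this by counting: the number of elements of $\End(E_0) \cap K_1$ of $\ell$-power reduced norm at most $\ell^{n_2}$, where $n_2 = O(\log p)$, is only polynomial in $\log p$, whereas by Ramanujan mixing the endpoint of the second walk is close to uniform on a supersingular vertex set of size $\Theta(p)$; together these force $\Pr[\alpha_2 \in K_1] = O(1/p)$. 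Combining the three failure probabilities (together with the stated side condition excluding $0$ and $1728$, which avoids the automorphism ambiguity in the correspondence between cycles and endomorphisms) yields the claimed $1 - O(1/p)$ success probability.
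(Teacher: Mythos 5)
Your runtime/space analysis and your treatment of the probability that the random walks hit $S^p$ match the paper's proof: both rest on Theorem~\ref{lowerbound} giving $|S^p|=\Omega(\sqrt{p}/\log\log p)$ and on Proposition~\ref{mixing}. The divergence is in the key step, where the paper does not argue non-commutativity directly. Instead it invokes Corollary~4.12 of~\cite{BCEMP}, which says that two cycles generate an order provided they avoid $0$ and $1728$, have no self-loops or backtracking, and one contains a vertex the other does not; the bulk of the paper's proof is then a combinatorial analysis of trimming, showing that the distinguished endpoint $j_k\in S^p$ of $P_1$ survives the trimming of $P_1P_1^p$ (the delicate case being when the last edge of $\tilde P_1$ is dual to the first edge of $\tilde P_1^p$), after which a ``new vertex'' argument in two cases ($j_0\in\FF_p$ or not) finishes the job.

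Your replacement for this step has genuine gaps. First, you assert that after trimming $\alpha_1\notin\ZZ$ with probability $1-O(1/p)$, but this is exactly what requires proof: trimming could in principle collapse the cycle or reintroduce backtracking at the seam between $\tilde P_1$ and $\tilde P_1^p$, and the paper's case analysis exists precisely to rule this out. Without it, neither your approach nor the paper's gets off the ground. Second, your counting argument for $\Pr[\alpha_2\in K_1]$ uses that the endpoint of the second walk is close to uniform on a set of size $\Theta(p)$, but the algorithm selects a walk conditioned on its endpoint lying in $S^p$, a set of size only $\tilde\Theta(\sqrt{p})$; under the correct conditional measure each candidate vertex carries mass roughly $\log\log p/\sqrt{p}$, so your argument yields at best $O(\operatorname{poly}(\log p)/\sqrt{p})$, not $O(1/p)$. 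Third, the bridge from ``few elements of $\End(E_0)\cap K_1$ of $\ell$-power norm'' to ``the walk rarely produces one of them'' needs to be made explicit: you must argue that each such element (with cyclic kernel, up to the unit ambiguity) determines a specific closed walk, hence specific vertices, before the endpoint distribution can be applied. The counting itself is fine (at most $O((\log p)^2)$ such elements, since a quadratic order has $O(n)$ ideals of norm $\ell^n$ and at most six units), but as written the probabilistic conclusion does not follow.
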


\begin{rmk}\label{remark: 0 or 1728 edge case}
  In Section~\ref{section: EndE from Lambda} we use this proposition
  to compute endomorphism rings, and from this point there is no
  problem with excluding paths through 0 or 1728. This is because the
  endomorphism rings of the curves with $j$-invariants 0 and 1728 are
  known, and a path of length $\log P$, starting at $j_0$ going
  through 0 or 1728 lets us compute $\End(E_0)$ via the reduction in
  Section~\ref{section: reduction}.
\end{rmk}

\begin{proof}
  We implement Step~(\ref{alg:walk}) by letting $j_{i+1}$
  be a random root of $\Phi_{\ell}(j_i, Y)$. To test if $j \in S^p$ we
  check if $\Phi_{\ell}(j,j^p) =0$.
  Assuming GRH,  Theorem~\ref{lowerbound} below implies that,
  $|S^p| = \Omega(\sqrt{p}/\log {\log p})$ (treating $\ell$ as a
  constant). Proposition~\ref{mixing} implies that the endpoint $j_k$
  of a random path found in Step~(\ref{alg:walk}) is in $S^p$ with
  probability $\Omega(1/(\sqrt{p} \log { \log p}))$.  The probability
  that none of the $N+1$ paths land in $S^p$ is at most
  $(1-C/(\sqrt{p}\log \log p))^{N+1} \leq (1+C/(\sqrt{p}\log \log
  p))^{-(N+1)} \leq e^{-c\log p/C} = O(1/p)$ if $c=C$, where $C$ is
  from Theorem~\ref{lowerbound} and $c$ is the constant used in the
  choice of $N$.

   Now we must show that with high probability the two cycles $C_0,C_1$
  returned by the algorithm are linearly independent. We will use Corollary 4.12
  of~\cite{BCEMP}. This corollary states that two cycles $C_0$ and
  $C_1$ with no self-loops generate an order inside $\End(E_0)$ if (1)
  they do not go through 0 or 1728, (2) have no backtracking, and (3)
  have the property that one cycle contains a vertex that the other
  does not contain.

  By construction, the cycles $C_0$ and $C_1$ returned by our
  algorithm do not have any self-loops or backtracking.  To prove that
  condition (3) holds, we first claim that with high probability, the
  end vertex $j_k \in S^p$ in the path $P_1$ from $j_0$ to $j_k$ will
  not get removed when the path $P_1P_1^p$ is trimmed in Step
  (3). Then we show it's also still there in the trimmed cycle after
  Step (5).  Observe that if the path $P_1$ were to be trimmed to
  obtain a path $\tilde{P}_1$ with no backtracking, then $\tilde{P}_1$
  is still a nontrivial path that starts at $j_0$ and ends at $j_k$ as
  long as $j_0$ and $j_k$ are different which occurs with probability
  $1-O(1/p)$. After concatenating $\tilde{P}_1$ with its corresponding
  path $\tilde{P}_1^p$, the path $\tilde{P}_1\tilde{P}_1^p$ has
  backtracking only if the last edge of $\tilde{P}_1$ is dual to the
  first edge in $\tilde{P}_1^p$, i.e.\ if $j_{k-1} = j_k^p$. If that
  is the case, remove the last edge from $\tilde{P}_1$ and the first
  edge from $\tilde{P}_1^p$, and call the remaining path
  $\hat{P}_1$. The new path $\hat{P}_1$ still has the property that it
  ends in a vertex $j=j_k^p$ that is $\ell$-isogenous to its conjugate
  $(j_k^p)^p=j_k$. After concatenating $\hat{P}_1$ with its
  corresponding $\hat{P}_1^p$, this still gives a path from $j_0$ to
  $j_0^p$. Again, the concatenation of these two paths has no
  backtracking unless the last edge in $\hat{P}_1$ is the first edge
  in $\hat{P}_1^p$, i.e.\ if the last edge in $\hat{P}_1$ is an edge
  from $j_k$ to $j_k^p$. But this cannot happen, because otherwise the
  trimmed path $\tilde{P}_1$ would have backtracking because it would
  go from $j_k$ to $j_k^p$ and back to $j_k$, contradicting the
  definition of a trimmed cycle.  (With negligible probability, the
  vertex $j_k$ has multiple edges, so we exclude this case here.)
  Hence the trimmed version of $P_1P_1^p$ is $\hat{P}_1 \hat{P}_1^p$,
  and this path still contains the vertex $j_k$, since $\hat{P}_1^p$
  contains the vertex $j_k$. Now we can finish the argument by
  considering two cases:

  Case 1: $j_0 \in \FF_p$. The above argument about trimming shows
  that if the vertex $j_k$ appearing in the second cycle $C_1$ is
  different from all the vertices appearing in $C_0$ and their
  conjugates, which happens with probability $1-O(\log p/p)$, then
  that vertex $j_k$ will appear in the trimmed cycle $\tilde{C}_1$,
  but not in $\tilde{C}_0$. (This is because in this case the trimmed
  path $P_1P_1^p$ is already a cycle.) Hence by \cite[Corollary 4.12]{BCEMP},
  $\tilde{C}_0$ and $\tilde{C}_1$ are linearly independent.

  Case 2: If $j_0 \in \FF_{p^2}-\FF_p$, then with probability
  $1-O(\log(p)/p)$, the endpoint $j_k$ of $P_2$ is a vertex such that it
  or its conjugate do not appear as a vertex in $P_1$. The
  concatenation of the two paths $P=P_1P_1^p$ and $P'=P_2P_2^p$ in
  reverse is a cycle $C_0$ through $j_0$. When we trim it, it is
  still a cycle through $j_0$ in which the endpoint $j_k$ from $P_2$ appears
  because that $j_k$ or its conjugate did not appear in $P_1$.
  Similarly, Algorithm~\ref{alg: find end}  finds a second cycle $C_1$ with probability
  $1-\log(p)/p$ that contains a random vertex that was not on the first
  cycle $C_0$.
   This means that by Corollary~4.12 of~\cite{BCEMP},
  $\tilde{C}_0$ and $\tilde{C}_1$ and hence $C_0$ and $C_1$ are linearly independent.

  The running time is $O(\sqrt{p} \; (\log p)^2)$ because we are considering
  $O(\sqrt{p})$ paths of length $O(\log p)$, going from one vertex
  to the next takes time polynomial in $\ell \log p$, and we are
  assuming that $\ell$ is fixed. The storage is polynomial in $\log p$
  because we only have to store the paths $P_1,P_2$ that land in
  $S^p$.
  \end{proof}

\subsection{Determining the size of $S^p$.}
Will now determine a lower bound for the size of the set
$S^p \coloneqq \{j \in \FF_{p^2}\colon j \text{ is supersingular and
  $j$ is adjacent to $j^p$ in $G(p,\ell)$}\}$.  In \cite[Section
7]{CGL2009}, an upper bound is given for $S^p$, but in order to
estimate the chance that a path lands in $S^p$ we need a lower bound
for this set.

 Let $\ell,p$ be primes such that $\ell<p/4$.
Let $\OO_K$
        be the ring of integers of $K\coloneqq \QQ(\sqrt{-\ell p})$. We use the terminology and notation in in \cite{Elk89,BBEL}.
    Let $\Emb_{\OO_K}(\FF_{p^2})$ be the collection of pairs
        $(E,f)$ such that $E$ is an elliptic curve over $\FF_{p^2}$
        and $f\colon\OO_K \hookrightarrow \End(E)$ is a normalized
        embedding, taken up to isomorphism.
        We say $f\colon\OO_K \hookrightarrow \End({E})$ is {\em
          normalized} if each $\alpha \in \OO_K$ induces
        multiplication by its image in $\FF_{p^2}$ on the tangent space of
        ${E}$, and $(E,f)$ is isomorphic to $(E',f')$ if there exists
        an isomorphism $g: E\to E'$ such that $f(\alpha)' =
        gf(\alpha)g^{-1}$ for all $\alpha\in \OO_K$.

\begin{thm}\label{lowerbound}
Let $\ell$ be a prime and assume that $\ell<p/4$.  Let
\[
S^p= \{ j\in \FF_{p^2}: j \text{ is supersingular and } \Phi_{\ell}(j,j^p) = 0\}.
\]
Under GRH there is a constant $C>0$ (depending on $\ell$) such that
$|S^p| > C\frac{\sqrt{p}}{\log \log ( p)}$.
\end{thm}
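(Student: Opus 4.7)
The plan is to lower bound $|S^p|$ by relating it to the set $\Emb_{\OO_K}(\FF_{p^2})$ of normalized embeddings of $\OO_K$ into supersingular endomorphism rings, then invoke an Elkies-style class number formula for the latter together with Littlewood's conditional lower bound for $h(\OO_K)$.

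First, I would construct a many-to-one map $\Emb_{\OO_K}(\FF_{p^2}) \to S^p \cup \{0,1728\}$ by $(E,f) \mapsto j(E)$. Given $(E,f)$, the element $\alpha \coloneqq f(\sqrt{-\ell p}) \in \End(E)$ has reduced trace $0$ and reduced norm $\ell p$. Because $E/\FF_{p^2}$ is supersingular and $\ell\neq p$, the inseparable factor in the separable/inseparable factorization of $\alpha$ has degree exactly $p$ and the separable factor has degree $\ell$. The inseparable factor is (up to a post-composition isomorphism on $E^{(p)}$) the $p$-power Frobenius $\pi: E\to E^{(p)}$, so the separable factor is an $\ell$-isogeny between $E^{(p)}$ and $E$. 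This witnesses an edge from $j(E)$ to $j(E)^p$ in $G(p,\ell)$, showing $j(E) \in S^p$ whenever $j(E)\notin\{0,1728\}$. The two exceptional values contribute at most $O(1)$ to the count and are handled separately.

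Next, I would bound the fibers. For $j \in S^p$ with $j\notin\{0,1728\}$ we have $\Aut(E(j)) = \{\pm 1\}$, so normalized embeddings $f,f'\colon \OO_K \hookrightarrow \End(E(j))$ giving the same pair in $\Emb_{\OO_K}(\FF_{p^2})$ differ only by sign. The a priori ambiguity coming from the Galois involution $\sqrt{-\ell p}\mapsto -\sqrt{-\ell p}$ of $K/\QQ$ is eliminated by the normalization condition, which pins down the action of $\sqrt{-\ell p}$ on the tangent space. Hence each $j \in S^p$ has fiber size bounded by an absolute constant. I would then apply the Elkies-style counting formula developed in~\cite{Elk89, BBEL} to conclude
\[
|\Emb_{\OO_K}(\FF_{p^2})| \gg h(\OO_K).
\]

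Finally, under GRH, Littlewood's effective bound gives
\[
h(\OO_K) \gg \frac{\sqrt{|\disc K|}}{\log\log|\disc K|} \gg \frac{\sqrt{\ell p}}{\log\log p},
\]
since $|\disc K| \in \{\ell p, 4\ell p\}$ depending on $\ell p \bmod 4$, and $\ell$ is treated as a constant. Combining this with the bounded-fiber map yields $|S^p| \gg \sqrt{p}/\log\log p$, as required.

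The main obstacle is the cleanup in the second step: carefully matching up normalization, units $\OO_K^\times = \{\pm 1\}$, and curve automorphisms to get a truly constant fiber bound, and simultaneously invoking (or deriving) the right form of the Elkies-BBEL counting formula that accommodates the possibility that $-\ell p$ is not a fundamental discriminant. The first step (separable/inseparable factorization forcing Frobenius) is standard, and the GRH class-number bound of Littlewood is classical once the reduction is in place.
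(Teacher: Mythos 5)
Your overall strategy matches the paper's: map $\Emb_{\OO_K}(\FF_{p^2})$ to $S^p$ via $(E,f)\mapsto j(E)$, use the Deuring/Gross--Zagier correspondence to get $|\Emb_{\OO_K}(\FF_{p^2})| = h(\OO_K)$ and Littlewood's GRH bound $h(\OO_K)\gg \sqrt{\ell p}/\log\log(\ell p)$, then divide by a uniform fiber bound. The first step (the separable/inseparable factorization of $f(\sqrt{-\ell p})$ forcing an $\ell$-isogeny $E^{(p)}\to E$) and the class-number input are fine. The gap is in the fiber bound. For $j\notin\{0,1728\}$ you correctly note that $\Aut(E(j))=\{\pm 1\}$ makes the isomorphism relation on pairs $(E,f)$ trivial, so the fiber over $j$ is in bijection with the set of \emph{distinct} normalized embeddings $\OO_K\hookrightarrow\End(E(j))$. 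But that observation bounds nothing --- if anything it says there are \emph{more} classes, not fewer --- and you give no argument that this set of embeddings has bounded size. A priori it need not be small: the number of embeddings of $\OO_K$ into a single maximal order is an optimal embedding number, and the sum of these numbers over all maximal orders is on the order of $h(-\ell p)\approx\sqrt{\ell p}$, so without further work one fiber could in principle absorb a positive proportion of $\Emb_{\OO_K}(\FF_{p^2})$, making the division step vacuous. The missing idea, which is the crux of the paper's proof, is to refine the map: send $[(E,f)]$ to the \emph{edge} of $G(p,\ell)$ from $j$ to $j^p$ cut out by $\ker f(\sqrt{-\ell p})$ (equivalently by the order-$\ell$ kernel of the separable part you already identified). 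There are at most $\ell+1$ such edges out of $j$, and two classes determining the same edge differ by post-composition with an automorphism of $E$, of which there are at most $6$; hence $|\rho^{-1}(j)|\le 6(\ell+1)$, which is where the constant $C$ actually comes from.

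The exceptional $j$-invariants are a second, related gap: saying $0$ and $1728$ ``contribute $O(1)$ to the count and are handled separately'' does not suffice, because what must be controlled is not their contribution to $|S^p|$ but the size of the fibers $\rho^{-1}(0)$ and $\rho^{-1}(1728)$, for the same reason as above. The edge-counting bound still works there, but one must first show that a class $[(E,f)]$ with $j(E)\in\{0,1728\}$ determines a well-defined edge, i.e.\ that the extra automorphisms $u\in\Aut(E)$ commute with the separable degree-$\ell$ part $\phi$ of $f(\sqrt{-\ell p})$. The paper proves this by observing that otherwise $u$ and $\phi$ generate an order with $\discrd(\Lambda)\le\frac14\Delta(u)\Delta(\phi)$, which contradicts $\discrd(\Lambda)$ being divisible by $p$ precisely because $\ell<p/4$. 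Your argument never uses the hypothesis $\ell<p/4$, which is a signal that this case has not actually been handled.
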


\begin{proof}

  First, if $E$ is a
supersingular elliptic curve defined over $\FF_{p^2}$ with
$j$-invariant $j$ and $E^{(p)}$ is a curve with $j$-invariant $j^p$
and $\ell < p/4$ is also a prime, then $E$ is $\ell$-isogenous to
$E^{(p)}$ if and only if $\ZZ[\sqrt{-\ell p}]$ embeds into $\End(E)$~\cite[Lemma 6]{CGL2009}.

For any element
  $(E,f)\in \Emb_{\OO_K}(\FF_{p^2})$, $E$ is supersingular, since
  $p$ ramifies in $\QQ(\sqrt{-\ell p})$. Moreover $j(E) \in
  S^p$ by the above fact. Thus the map $\rho\colon
  \Emb_{\OO_K}(\FF_{p^2}) \to S^p$ that sends $(E,f)$ to $\rho(E,f)=j(E)$ is well-defined.

  To get a lower bound for $S^p$ we will show that for $j \in S^p$, the
  size of $\rho^{-1}(j)$ is bounded by $(\ell+1) \cdot 6$ and that
  $|\Emb_{\OO_K}(\FF_{p^2})|\gg \frac{\sqrt{\ell p}}{\log \log (\ell
    p)}$ . These two facts imply %that
  \[|S^p| \geq |\Emb_{\OO_K}(\FF_{p^2})|/((\ell +1) \cdot 6) >
    \frac{1}{(\ell +1) \cdot 6} \cdot \frac{\sqrt{\ell p}}{\log \log (\ell p)}.\]

  To get a lower bound for $|\Emb_{\OO_K}(\FF_{p^2})|$ we can use
 \cite[Proposition 2.7]{GZ85} to show that $\Emb_{\OO_K}(\FF_{p^2})$
  is in bijection with $\Ell_{\OO_K}(\hat L_{\Pp})$, where
  $\hat L_{\Pp}$ is the algebraic closure of the completion of the
  ring class field $H_{\OO_K}$ at a prime $\Pp$ above $p$, and
  $\Ell_{\OO_K}(\hat L_{\Pp})$ is the set of isomorphism classes of
  elliptic curves over $\hat L_{\Pp}$ with endomorphism ring $\OO_K$.
Hence $|\Emb_{\OO_K}(\FF_{p^2})| =  |\Ell_{\OO_K}(\hat L_{\Pp})|$ whose
  order equals   $  |\Cl(\OO_K)|$. Class
  group estimates from \cite{Lit28} give
  $ |\Cl(\OO_K)|= h(-\ell p)\gg \frac{\sqrt{\ell p}}{\log \log (\ell
    p)}$.

  It remains to bound the size of $\rho^{-1}(j)$. We claim that an
  equivalence class of pairs $(E,f)$ determines an edge in
  $G(p,\ell)$. Let $[(E,f)]\in \Emb_{\OO_K}(\FF_{p^2})$ be given by
  some representative curve $E$. First assume that
  $j(E)\not= 0, 1728$. Then $(E,f)\simeq(E,g)$ implies that $f = g$,
  since $\Aut(E)=\pm1$. Thus we may identify $[(E,f)]$ with the edge
  in $G(p,\ell)$ corresponding to the kernel of $f(\sqrt{-\ell
    p})$. When $j(E)=0$ or $1728$, we may assume that $E$ is defined
  over $\FF_p$. Then let $[(E,f)]\in \Emb_{\OO_K}(\FF_{p^2})$ and
  suppose $(E,f)$ is equivalent to $(E,g)$. We can factor
  $f(\sqrt{-\ell p}) = \pi\circ \phi$ and
  $g(\sqrt{-\ell p}) = \pi \circ \phi'$, where $\phi,\phi'$ are degree
  $\ell$ endomorphisms of $E$ and $\pi$ is the Frobenius endomorphism
  of $E$. Additionally, $\pi\phi = u\pi\phi'u^{-1}$. We claim that $u$
  and $\phi$ commute. If not, then they generate an order $\Lambda$
  such that the following formula holds (see~\cite{LV15a}):
\begin{align}\label{eqn: discrd bound}
    \discrd(\Lambda) = \frac{1}{4}(\Delta(u)\Delta(\phi) - (\Trd(u)\Trd(\phi) - 2\Trd(u\hat{\phi}))^2) \leq \frac{1}{4}\Delta(u)\Delta(\phi).
\end{align}

One can show that this contradicts our assumption that $p/4>\ell$. Thus $u$ and $\phi$ commute,
and we see that $f(\sqrt{-\ell p})$ and $g(\sqrt{-\ell p})$ have the same
kernel and thus determine the same edge in $G(p,\ell)$.

We now count how many elements of $\Emb_{\OO_K}(\FF_{p^2})$ determine the same edge in $G(p,\ell)$. Suppose that $[(E,f)],[(E,g)]\in \Emb_{\OO_K}(\FF_{p^2})$ and that $\ker(f(\sqrt{-\ell p})) = \ker(g(\sqrt{-\ell p}))$. Writing $f(\sqrt{-\ell p}) = \phi \circ\pi$ and $g(\sqrt{-\ell p})=\phi' \circ\pi$ we see that $\phi$ and $\phi'$ must have the same kernel. Thus $\phi' = u\phi$ for some $u\in \Aut(E)$. Because $p>4\ell > 3$, $\Aut(E)\leq 6$ and we conclude that there are at most $6$  classes $[(E,f)]$ determining the same edge emanating from $j(E)$ in $G(p,\ell)$. Thus $|\rho^{-1}(j)|\leq (\ell + 1)\cdot 6$.
\end{proof}

Assuming GRH, this result settles the lower-bound portion of Question
3 in~\cite{ACLLNSS}. See Lemma 6 of~\cite{CGL2009} for the
upper-bound.

\section{Enumerating maximal superorders: the local
  case}\label{section: local superorders}
Let $q$ be a prime.
In this section, we give an algorithm for the following problem:

\begin{prob}
  Given a $\ZZ_q$-order $\Lambda\subseteq M_2(\QQ_q)$, find all
  maximal orders containing $\Lambda$.
\end{prob}

For general $\Lambda$ there might be an exponential number of
maximal orders containing it, so the algorithm for enumerating them
would also be exponential time. However, we will show that the above
problem can be solved efficiently when
$\Lambda$ is Bass. The main property of local Bass orders $\Lambda$ we use
is that there are at most $e+1$ maximal orders containing $\Lambda$,
where $e=v_q(\discrd(\Lambda))$~\cite[Corollary 2.5, Proposition
3.1, Corollary 3.2, and Corollary 4.3]{Brz83}.

We use the Bruhat-Tits tree $\mathcal{T}$~\cite[\S 23.5]{Voight} to
compute the maximal superorders of $\Lambda$. The vertices of
$\mathcal{T}$ are in bijection with maximal orders in $M_2(\QQ_q)$.

A homothety class of lattices $[L]\subseteq \QQ_q^2$ corresponds to a
maximal order via \begin{equation}\label{equivalence}L
  \mapsto \End_{\ZZ_q}(L)= \{x \in M_2(\QQ_q): xL \subseteq L\}
  \subseteq M_2(\QQ_q)\end{equation} for every choice of $L \in
[L]$. Two maximal orders $\OO$ and $\OO'$ are adjacent in
$\mathcal{T}$ if there exist lattices $L, L'$ for $\OO$ and $\OO'$
such that $qL\subsetneq L' \subsetneq L$. Hence the neighbors of $\OO$
in $\mathcal{T}$ correspond to the one-dimensional subspaces of
$L/qL \cong \FF_q\times \FF_q$.%, and each vertex has degree $q+1$.

A division quaternion algebra $B$ over $\QQ_q$ has only one maximal
order, which can be found using the algorithm in~\cite{V2013}. The
split case is solved by the algorithm below, and also relies on the
algorithm in~\cite{V2013}.

\begin{alg}\label{alg: enumerate local}
    Enumerate all maximal orders containing a local order \\
    Input: A $\ZZ_q$-order $\Lambda \subseteq M_2(\QQ_q)$. \\
    Output: All maximal orders in $M_2(\QQ_q)$ containing $\Lambda$.
    \begin{enumerate}
    \item\label{step: one maximal} Compute a maximal order
      $\tilde \OO\supseteq \Lambda$ with
      \cite[Algorithm~7.10]{V2013} and a lattice $\tilde L$ in
      $\QQ_q\times \QQ_q$ such that
      $\tilde O = \End_{\ZZ_q}(\tilde L)$.
        \item Let $A= \{\tilde \OO\}$ and $B=\{\tilde L\}$.
        \item While $B \not= \emptyset$:
        \begin{enumerate}
            \item Remove $L$ from $B$, and label it as discovered. Set $\OO=\End_{\ZZ_q}(L)$.
            \item Compute the set of neighbors $\mathcal{N}_{\OO}$ of $\OO$
              that contain $\Lambda$.
            \item \label{step: check containment} For each
              $\OO' \in \mathcal{N}_{\OO}$ not labeled as
              discovered, add $\OO'$ to $A$ and
              its corresponding lattice to $B$.
         \end{enumerate}
        \item Return $A$.
    \end{enumerate}
\end{alg}

Now we show that Algorithm~\ref{alg: enumerate local} is efficient
when the input lattice $\Lambda$ is Bass.

\begin{prop}\label{prop: enumerate local}
  Let $\Lambda\subseteq M_2(\QQ_q)$ be a Bass $\ZZ_q$-order, and
  $e\coloneqq v_q(\discrd(\Lambda))$. Algorithm~\ref{alg: enumerate local} computes $A \coloneqq \{\OO \supseteq \Lambda: \OO \text{ is maximal} \}$, and
  $|A|\leq e+1$. The runtime is polynomial in
  $\log q \cdot \text{size}(\Lambda)$.
\end{prop}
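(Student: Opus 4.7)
The plan is to establish correctness by showing that
$\mathcal{M}_\Lambda := \{\OO \supseteq \Lambda : \OO \subseteq M_2(\QQ_q) \text{ is a maximal order}\}$
is a connected subgraph of the Bruhat--Tits tree $\mathcal{T}$; breadth-first search from $\tilde\OO \in \mathcal{M}_\Lambda$ then exhausts $\mathcal{M}_\Lambda$. The size bound $|A| \leq e+1$ is immediate from the Brzezinski theorem cited just above the algorithm. The runtime analysis reduces to (i) costing Step~(\ref{step: one maximal}) via \cite[Algorithm~7.10]{V2013}, (ii) bounding the number of main-loop iterations by $|\mathcal{M}_\Lambda| \leq e+1$, and (iii) bounding the per-iteration cost of computing $\mathcal{N}_\OO \cap \mathcal{M}_\Lambda$.

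For the connectedness of $\mathcal{M}_\Lambda$, I would invoke the standard description of geodesics in $\mathcal{T}$: any two vertices $[L_1], [L_2]$ at distance $d$ admit representatives with $L_1 \supsetneq L_2$ and $L_1/L_2 \cong \ZZ_q/q^d \ZZ_q$ cyclic, and the unique geodesic between them is realised by the chain $L^{(i)} := L_2 + q^i L_1$ for $0 \leq i \leq d$. For any $x$ with $xL_1 \subseteq L_1$ and $xL_2 \subseteq L_2$ one has $xL^{(i)} = xL_2 + q^i\, xL_1 \subseteq L_2 + q^i L_1 = L^{(i)}$, so $\Lambda \subseteq \End_{\ZZ_q}(L_1) \cap \End_{\ZZ_q}(L_2)$ forces $\Lambda \subseteq \End_{\ZZ_q}(L^{(i)})$ for every $i$; the whole geodesic lies in $\mathcal{M}_\Lambda$. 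Since Step~(\ref{step: one maximal}) outputs $\tilde\OO \in \mathcal{M}_\Lambda$, the BFS halts with $A = \mathcal{M}_\Lambda$, and a standard queue/visited-set implementation witnesses the algorithm's semantics.

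For the runtime, Brzezinski's bound gives $|\mathcal{M}_\Lambda| \leq e+1$, and since $e = v_q(\discrd(\Lambda)) = O(\text{size}(\Lambda)/\log q)$, the outer loop runs at most polynomially often. In each iteration, the $q+1$ neighbors of $\OO = \End_{\ZZ_q}(L)$ in $\mathcal{T}$ are parametrised by $\FF_q$-lines $W \subseteq L/qL \cong \FF_q^2$, and the neighbor $\OO_W$ contains $\Lambda$ iff $W$ is stabilised by the image in $\End_{\FF_q}(L/qL) = M_2(\FF_q)$ of a fixed $\ZZ_q$-generating set of $\Lambda$. This reduces to a common-invariant-line computation for at most four $2 \times 2$ matrices over $\FF_q$, solvable by linear algebra in time polynomial in $\log q$ and $\text{size}(\Lambda)$; by Brzezinski's classification of local quaternionic Bass orders, $\mathcal{M}_\Lambda$ is in fact a path in $\mathcal{T}$, so the computation returns at most two candidate lines rather than all $q+1$. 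Testing whether a candidate is already in $A$ amounts to comparing homothety classes of lattices, which is also polynomial.

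The main obstacle will be ensuring the per-iteration cost is polynomial in $\log q$ rather than linear in $q$: this is precisely where the Bass hypothesis is needed, both to invoke the Brzezinski bound on $|\mathcal{M}_\Lambda|$ and to force the image of $\Lambda$ in $M_2(\FF_q)$ to be ``non-scalar enough'' that only a polylogarithmic number of invariant lines appear at each step. The connectedness argument via the chain $L^{(i)} = L_2 + q^i L_1$ is the other step that must be carried out carefully, but it is a direct computation with no Bass-specific input.
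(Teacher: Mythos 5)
Your proposal is correct and follows essentially the same route as the paper's proof: connectedness of the set of maximal superorders as a subtree (in fact a path) of the Bruhat--Tits tree, Brzezinski's bound giving $|A|\le e+1$, and a per-iteration computation of the lines in $L/qL$ fixed by the image of $\Lambda$ in $M_2(\FF_q)$. The only differences are matters of detail: you verify connectedness directly via the chain $L^{(i)}=L_2+q^iL_1$ where the paper cites Voight's description of the maximal orders containing $\OO\cap\OO'$ as the vertices of the path between them, and you are somewhat more explicit than the paper about why the fixed lines can be enumerated in time polynomial in $\log q$ rather than in $q$ (an eigenline computation, with the Bass/path structure ruling out the all-scalar case in which every line is fixed).
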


\begin{proof}
  To prove correctness we first show that the maximal orders
  containing an arbitrary order $\Lambda'$ in $M_2(\QQ_q)$ form a
  subtree of $\mathcal{T}$. If $\OO,\OO'$ are two maximal orders
  containing $\Lambda'$, then the maximal orders containing $\OO\cap\OO'$ are precisely the vertices in the path between
  $\OO$ and $\OO'$ in $\mathcal{T}$~\cite[\S 23.5.15]{Voight}. Each
  order on this path also contains $\Lambda'$, so the maximal orders
  containing $\Lambda'$ form a connected
  subset of $\mathcal{T}$. The above algorithm explores this
  subtree.

  If $\Lambda$ is Bass and Eichler, i.e.\ $\Lambda = \OO \cap \OO'$
  for maximal orders $\OO,\OO'$, then there are $e+1$
maximal orders containing $\Lambda$~\cite[Corollary 2.5]{Brz83}, and
they are exactly the vertices on the path from $\OO$ to $\OO'$. If
$\Lambda$ is Bass but not Eichler, then there are either 1 or 2
maximal orders containing $\Lambda$ by \cite[Proposition 3.1, Corollary
3.2, and Corollary 4.3]{Brz83}. Since they form a tree, they must also
form a path. In either case, $|A|\leq e+1$, and the vertices in $A$
form a path.

As for the running time, in Step~\ref{step: one maximal} we run
\cite[Algorithm~7.10]{V2013}, which is polynomial in
$\log q \cdot \text{size}(\Lambda)$.  Let $L$ be a lattice such that
$\OO=\End_{\ZZ_q}(L)$ contains $\Lambda$.  The neighbors of $\OO$
containing $\Lambda$ are in bijection with the lines in $L/qL$ fixed
by the action of the image of $\Lambda$ in
$\OO/q\OO\simeq M_2(\FF_q)$. For each such line, let $\bar{v}\in L/qL$
be a nonzero vector, and let $v$ be a lift to $L$. Let $w\in L$
be such that $\{v,w\}$ is a $\ZZ_q$-basis of
$L$. Then $L'\coloneqq \text{span} \{v,qw\}$ is a $\ZZ_q$-lattice such that
$\OO'\coloneqq
\End_{\ZZ_q}(L')$ contains $\Lambda$. So we can efficiently
compute the lattices $L'$ corresponding to the neighbors of $\OO$ which
contain $\Lambda$. Given such an $L'$, let $x\in M_2(\QQ_q)$ be the
base change matrix from $L$ to $L'$.
If $\mathcal{B}$ is a basis for $\OO$, then
$\mathcal{B}'\coloneqq x\mathcal{B}x^{-1}$ is a basis for $\OO'$.  The size of
$\mathcal{B}'$ is $c(\log q)+\text{size}(\OO)$ for some constant $c$,
so each neighbor of $\OO$ containing $\Lambda$ can be computed in time
polynomial in $\log q \cdot \text{size}(\OO)$.

Since the length of the path explored in the algorithm has length at
most $e$, where $e=v_q(\discrd(\Lambda))$ is polynomial in
$\text{size}(\Lambda)$, and the starting order $\tilde \OO$ is
polynomial in $\log q \cdot \text{size}(\Lambda)$ we obtain that the
size of any maximal order containing $\Lambda$ is polynomial in
$\text{size}(\Lambda) \cdot \log q$. Each step takes time polynomial
in $\log q \cdot \text{size}(\Lambda)$, so the whole algorithm is
polynomial in $\log q \cdot \text{size}(\Lambda)$.

\end{proof}

Later we will need to enumerate the $q$-maximal
$\ZZ$-orders containing a Bass $\ZZ$-order $\Lambda$.
The algorithm below uses Algorithm~\ref{alg: enumerate
  local} to accomplish this.

\begin{alg}\label{alg: enumerate q-maximal}
    Enumerate the $q$-maximal $\ZZ$-orders $\OO$ containing $\Lambda$ \\
    Input: A $\ZZ$-order $\Lambda$ and prime $q$ such that $\Lambda\otimes \ZZ_q$ is Bass.\\
    Output: All $\ZZ$-orders $\OO\supseteq \Lambda$ such that $\OO$ is $q$-maximal and $\OO\otimes \ZZ_{q'} = \Lambda \otimes \ZZ_{q'}$ for all primes $q\not=q'$.
    \begin{enumerate}
        % \item Let $e=v_q(\discrd(\Lambda))$
        \item\label{step: approx embed} Compute an embedding $f\colon \Lambda \otimes \QQ \hookrightarrow M_2(\QQ_q)$ such that $f(\Lambda)\subseteq M_2(\ZZ_q)$.
        \item Let $A$ be the output of Algorithm~\ref{alg: enumerate
            local} on input $f(\Lambda)$.
        \item Return $\{f^{-1}(\OO) + \Lambda : \OO \in A\}$.
    \end{enumerate}
\end{alg}

\begin{lem}
  Algorithm~\ref{alg: enumerate q-maximal} is correct. The run time is
  polynomial in $\log q \cdot \text{size}(\Lambda)$.
\end{lem}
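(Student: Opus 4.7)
I would establish correctness via the local-global correspondence for $\ZZ$-lattices in a quaternion algebra, and then check that each step runs in polynomial time.

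Since Step~(\ref{step: approx embed}) produces an embedding $f\colon \Lambda \otimes \QQ \hookrightarrow M_2(\QQ_q)$, the quaternion algebra $\Lambda \otimes \QQ$ must split at $q$, and $f$ extends uniquely to a $\QQ_q$-algebra isomorphism $\hat f\colon \Lambda \otimes \QQ_q \xrightarrow{\sim} M_2(\QQ_q)$ carrying $\Lambda \otimes \ZZ_q$ to a Bass $\ZZ_q$-order in $M_2(\QQ_q)$. Proposition~\ref{prop: enumerate local} then shows that Step~(2) returns the set $A$ of \emph{all} maximal $\ZZ_q$-orders in $M_2(\QQ_q)$ containing $\hat f(\Lambda \otimes \ZZ_q)$, with $|A| \leq v_q(\discrd(\Lambda)) + 1$. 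By the local-global principle for lattices in a quaternion algebra, the $q$-maximal $\ZZ$-orders $\OO'\supseteq \Lambda$ satisfying $\OO' \otimes \ZZ_{q'} = \Lambda \otimes \ZZ_{q'}$ for every prime $q' \neq q$ are in bijection with $A$ via $\OO' \mapsto \hat f(\OO' \otimes \ZZ_q)$. I would then argue that Step~(3) realizes the inverse of this bijection: for each $\OO \in A$, one lifts a $\ZZ_q$-basis of $\hat f^{-1}(\OO)$ to elements of $\Lambda \otimes_{\ZZ} \ZZ[1/q]$ by $q$-adic approximation and adjoins $\Lambda$. The density of $\Lambda \otimes_{\ZZ} \ZZ[1/q]$ in $\Lambda \otimes \QQ_q$ together with the openness of $\hat f^{-1}(\OO)$ as a $\ZZ_q$-lattice guarantee that, for sufficient precision, the resulting $\ZZ$-order localizes to $\hat f^{-1}(\OO)$ at $q$ and to $\Lambda \otimes \ZZ_{q'}$ at every other prime.

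For the runtime, Step~(\ref{step: approx embed}) reduces to splitting $\Lambda \otimes \QQ_q \simeq M_2(\QQ_q)$, which is polynomial-time via the algorithms in~\cite{V2013}; a final rescaling by a power of $q$ arranges $f(\Lambda) \subseteq M_2(\ZZ_q)$. Step~(2) is polynomial by Proposition~\ref{prop: enumerate local}. Step~(3) consists of linear algebra over $\QQ_q$ at polynomial precision, rational reconstruction, and a $\ZZ$-module sum — each operation polynomial in $\log q \cdot \text{size}(\Lambda)$. Since $|A|$ is polynomial in $\text{size}(\Lambda)$, the claimed total bound follows.

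The main technical subtlety I expect is choosing the $q$-adic precision in Step~(3) large enough that the lifted elements, combined with $\Lambda$, genuinely generate $\hat f^{-1}(\OO)$ at $q$ and no more. Since the index of $\Lambda \otimes \ZZ_q$ in any maximal order in $A$ is controlled by $v_q(\discrd(\Lambda))$, precision polynomial in $\text{size}(\Lambda)$ is enough to separate the correct target order from its neighbors in the Bruhat--Tits tree.
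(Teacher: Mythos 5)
Your proposal is correct and follows essentially the same route as the paper's proof: Voight's algorithms for Step (1), Proposition~\ref{prop: enumerate local} to bound and enumerate the local maximal orders, lifting each to a $\ZZ$-lattice with denominators bounded by $q^{v_q(\discrd(\Lambda))}$, adjoining $\Lambda$, and invoking the local-global principle for orders to see that these are exactly the desired $q$-maximal orders. The only difference is that you spell out the $q$-adic precision argument that the paper dismisses as ``straightforward to check,'' which is a reasonable elaboration rather than a divergence.
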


\begin{proof}
  Step~\ref{step: approx embed} can be accomplished with Algorithms
  3.12, 7.9, and 7.10 in~\cite{V2013}, which run in time polynomial in
  $\log q \cdot \text{size}\Lambda$. For each maximal
  $\ZZ_q$-order $\OO \supseteq f(\Lambda)$, we then compute a
  corresponding $\ZZ$-lattice $\OO'\supseteq \Lambda$, whose generators
  are $\ZZ[q^{-1}]$-linear combinations of generators of $\Lambda$.
  The denominator of these coefficients is at most $q^{e}$ where
  $e\coloneqq v_q(\discrd(\Lambda))$. By Proposition~\ref{prop:
    enumerate local}, there are at most $e+1$ maximal orders
  containing $f(\Lambda)$ if $\Lambda\otimes \ZZ_q$ is Bass. It is
  straightforward to check that the lattice $\Lambda + \OO'$ is
  actually a $\ZZ$-order and has the desired completions. Moreover,
  these are all such orders by the local-global principle for orders,
  \cite[Theorem~9.5.1]{Voight}.
  \end{proof}

\begin{rmk}[The global case] Algorithm~\ref{alg: enumerate q-maximal}
  can be used to enumerate all maximal orders $\OO$ of a quaternion
  algebra $B$ over $\QQ$ that contain a $\ZZ$-order $\Lambda$ which is
  Bass, given $\Lambda$ and the factorization of $\discrd(\Lambda)$ as
  $\discrd(\Lambda)=\prod_{i=1}^m q_i^{e_i}$:

  We run Algorithm~\ref{alg: enumerate q-maximal} $m$ times, namely on
  $(\Lambda, q_1), \dots, (\Lambda, q_m)$. Let $\{X_1, \dots, X_m\}$
  be the output, where $X_i = \{\OO_{i1},\ldots,\OO_{in_i}\}$. The
  global orders containing $\Lambda$ are in bijection with
  $\prod_i{X_i}$, by associating to
  $(\OO_{1j_1},\ldots,\OO_{mj_m})\in \prod X_i$ the order
  $\sum_i \OO_{ij_i}$. In particular, the number of such orders is
  $\prod_i(e_i+1)$.  The correctness of this follows from the
  local-global principle for maximal orders
  \cite[Lemma~10.4.2]{Voight}.  The above results show that each order
  in the enumeration can be computed in time polynomial in the size of
  $\Lambda$.

  For an arbitrary order $\Lambda$, there might be an exponential
  number of global maximal orders containing it, for example when
  $\Lambda$ is the intersection of a set of representatives for the
  isomorphism classes of maximal orders. In this case it is not
  possible to compute the collection of these orders in polynomial
  time.  However, when $\Lambda$ is Bass, we can bound the number of
  maximal orders containing $\Lambda$, which is done in the next
  section.

\end{rmk}

\section{Computing $\End(E)$}\label{section: EndE
  from Lambda}
Now we describe our algorithm to compute the endomorphism ring of
$E$. By computing $\End(E)$ we mean computing a basis for an order
$\OO$ in $B_{p,\infty}$ that is isomorphic to $\End(E)$, and that we
can evaluate the basis at all points of $E$ via an isomorphism
$B_{p,\infty} \to \End(E) \otimes \mathbb{Q}$. First we give an
algorithm that uses Algorithm~\ref{alg: find end} to generate a Bass
suborder of $\End(E)$.  A heuristic about the distribution of
discriminants of cycles is used to show that just one call to
Algorithm~\ref{alg: find end} generates a Bass order with constant probability.  Then we give an algorithm which recovers $\End(E)$ from
a Bass suborder. The key property used here is that Bass orders
$\Lambda$ (whose basis is of size polynomial in $\log p$ and whose
discriminant is $O(p^k)$) only have $O(p^{\epsilon})$ maximal orders
containing them for any $\epsilon >0$. This is proved in
Proposition~\ref{numberofsuperorders} when the reduced discriminant is
square-free. Based on our numerical evidence, we conjecture that this
holds for general Bass orders as well.

  \subsection{Computing a Bass order.}\label{endoring}

Here is the algorithm for computing a Bass order.

  \begin{alg}\label{alg: compute bass}
Compute a Bass suborder $\Lambda\subseteq\End(E)$ \\
\noindent Input: A supersingular elliptic curve $E$. \\
\noindent Output: A Bass order $\Lambda \subseteq
\End(E)$ and the factorization of $\discrd(\Lambda)$, or false.

  \begin{enumerate}

\item\label{step: cycles} Compute two cycles in $G(p,\ell)$
through $j(E)$ using Algorithm~\ref{alg: find end}.

\item\label{step: traces} Let $\alpha,\beta$ be the endomorphisms
corresponding to the cycles from Step~\ref{step: cycles}.
Compute the Gram matrix for
$\Lambda = \langle 1,\alpha,\beta,\alpha\beta\rangle$.
\item\label{step: factor disc} Factor
$\discrd(\Lambda) = \prod_{i=1}^n q_i^{e_i}$.
\item If $\Lambda$ is Bass return $\Lambda$ and the factorization
  of $\discrd(\Lambda)$, else return false.
\end{enumerate}

\end{alg}
To analyze the algorithm we introduce a new heuristic:

\begin{heuristic}\label{heuristic} The probability that the
  discriminants of the two endomorphisms corresponding to the cycles
  produced by Algorithm~\ref{alg: find end} are coprime is at least
  $\mu$ for some constant $\mu>0$ not depending on $p$.
\end{heuristic}

This heuristic is based on our numerical experiments.  Intuitively, we
are assuming that the endomorphisms we compute with
Algorithm~\ref{alg: find end} have discriminants which are distributed
like random integers that satisfy the congruency conditions to be the
discriminant of an order in a quadratic imaginary field in which $p$
is inert and $\ell$ splits. Two random integers are coprime with
probability $6/\pi^2$. We are assuming that the discriminants of our
cycles are coprime with constant probability.

\begin{thm}\label{thm: compute bass}
  Assume GRH and Heuristic~\ref{heuristic}.  Then with probability at least $\mu$, Algorithm~\ref{alg: compute bass} computes a Bass order
  $\Lambda\subseteq\End(E)$, and the runtime is $O(\sqrt{p}(\log p)^2)$.
\end{thm}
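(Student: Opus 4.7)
The plan is to bound the runtime and success probability of Algorithm~\ref{alg: compute bass} step by step. Step~\ref{step: cycles} invokes Algorithm~\ref{alg: find end}; by Theorem~\ref{thm: compute order} this runs in time $O(\sqrt{p}(\log p)^2)$ under GRH, and with probability $1-O(1/p)$ returns two cycles whose corresponding endomorphisms $\alpha,\beta\in\End(E)$ are such that $\Lambda = \langle 1,\alpha,\beta,\alpha\beta\rangle$ is an order in $\End(E)$.

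For Steps~\ref{step: traces}--\ref{step: factor disc}, note that since the walks have length $O(\log p)$ and $\ell$ is fixed, $\Nrd(\alpha)=\Nrd(\beta)=\ell^{O(\log p)}=p^{O(1)}$. The reduced traces required for the Gram matrix can be computed using $\Trd(\varphi)=1+\Nrd(\varphi)-\Nrd(1-\varphi)$, which reduces to degree computations performed by evaluating endomorphisms on small torsion subgroups $E[N]$ for $N=O(\log p)$ prime and combining by CRT; this costs $\mathrm{poly}(\log p)$ time. All Gram matrix entries are bounded by products of norms and traces, giving $\discrd(\Lambda)\leq p^{O(1)}$. Factoring $\discrd(\Lambda)$ in Step~\ref{step: factor disc} then takes subexponential time $L_p[1/3] = o(\sqrt{p})$. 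The Bass check in Step~(4) reduces, via the factorization, to local checks at each of the $O(\log p)$ prime divisors of $\discrd(\Lambda)$, each executable in polynomial time via standard criteria (e.g., those underlying \cite{Brz83}). The overall runtime is therefore dominated by Step~\ref{step: cycles} and equals $O(\sqrt{p}(\log p)^2)$.

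The heart of the proof is to show that $\Lambda$ is Bass with probability at least $\mu$. I would invoke Heuristic~\ref{heuristic} to assume that with probability at least $\mu$ we have $\gcd(\Delta(\alpha),\Delta(\beta))=1$, and then argue that in this case $\Lambda$ is Bass. Fix any prime $q$; by coprimality, at least one of $\Delta(\alpha), \Delta(\beta)$ is a unit in $\ZZ_q$, say $\Delta(\alpha)\in\ZZ_q^\times$. Since the discriminant of the quadratic order $\ZZ_q[\alpha]$ equals $\Delta(\alpha)$, this order is maximal in $\QQ_q[\alpha]$ and thus integrally closed there. Hence $\ZZ_q[\alpha]\subseteq\Lambda\otimes\ZZ_q$ exhibits $\Lambda\otimes\ZZ_q$ as basic in the sense of \cite[\S 24.5]{Voight}. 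Since basic quaternion orders are Bass, $\Lambda\otimes\ZZ_q$ is Bass at every $q$, and therefore $\Lambda$ is Bass globally. Combining with the $1-O(1/p)$ success of Step~\ref{step: cycles} yields the probability bound.

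The main obstacle I anticipate is the careful invocation of the local basic-implies-Bass characterization of \cite[\S 24.5]{Voight} and verification that the CRT-based trace computation fits within the polylogarithmic budget, both of which rely on standard tools but need care given that $\alpha$ and $\beta$ are represented as compositions of $O(\log p)$ many $\ell$-isogenies.
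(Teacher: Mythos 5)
Your proposal is correct and follows essentially the same route as the paper: bound the runtime by the cycle-finding step, compute the Gram matrix via Schoof-style trace computations, check the Bass property locally, and deduce Bass from coprimality of $\Delta(\alpha)$ and $\Delta(\beta)$ via the basic-implies-Bass characterization. The only minor difference is that the paper observes the coprimality condition by itself already covers the case where a cycle passes through $0$ or $1728$ (where Theorem~\ref{thm: compute order} does not apply), whereas you lean on that theorem's $1-O(1/p)$ bound for linear independence; your argument in fact yields the same conclusion since coprime discriminants force $\alpha$ and $\beta$ to generate an order.
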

\begin{proof}
 In Step~\ref{step: traces}, the Gram matrix for $\Lambda$, whose
  entries are the reduced traces of pairwise products of the basis
  elements, is computed. This uses a generalization of Schoof's algorithm (see Theorem
  A.6 of~\cite{BCEMP}), which runs in time polynomial in
  $\log p$ and $\log$ of the norm of $\alpha, \beta$. Since $\alpha$
  and $\beta$ arise from cycles of length at most
  $c \lceil \log p\rceil$, for some constant $c$ which is independent of $p$, the norms of $\alpha$ and $\beta$ are at
  most $p^c$. From the Gram matrix we can efficiently compute $\discrd(\Lambda).$

    To check that $\Lambda$ is Bass, it is enough to check that
    $\Lambda$ is Bass at each $q$ dividing $\discrd(\Lambda)$
    \cite[Theorem 1.2]{CSV}.  To check that $\Lambda$ is Bass at $q$
    it is enough to check that $\Lambda\otimes \ZZ_q$ and
    $(\Lambda\otimes\ZZ_q)^{\natural}$ are Gorenstein \cite[Corollary
    1.3]{CSV}. An order is Gorenstein if and only if its
    ternary quadratic form is primitive~\cite[Corollary
    24.2.10]{Voight}, and this can be checked efficiently. Thus, given
    a factorization of $\discrd(\Lambda)$, we can efficiently decide if
    $\Lambda$ is Bass.

    Finally, we compute the probability that the order returned by
    Algorithm~\ref{alg: find end} is Bass.  By~\cite [Theorem
    1.2]{CSV}, an order is Bass if and only if it is basic, and being
    basic is a local property. It follows that the order $\Lambda$ is
    Bass whenever the conductors of $\ZZ[\alpha]$ and $\ZZ[\beta]$ are
    coprime. A sufficient condition for this is that the discriminants
    of $\alpha$ and $\beta$ are coprime which will happen with
    probability at least $\mu$ by the above heuristic. This sufficient
    condition also covers the case when the cycle for $\alpha$ or
    $\beta$ goes through $0$ or $1728$ even though Theorem~\ref{thm:
      compute order} does not apply here.
    % See the data in Figure~\ref{fig: bass data}.
\end{proof}

\subsection{Computing $\End(E)$ from a Bass order}
In this section we compute $\End(E)$ from a given Bass
suborder $\Lambda$. For this we enumerate the maximal
orders containing $\Lambda$ by taking sums of the $q$-maximal orders
returned by  Algorithm~\ref{alg: enumerate
  q-maximal}. As we enumerate the orders, we check each one to see if it is
isomorphic to $\End(E)$.

\begin{alg}\label{alg: enumerate and check}
  Compute $\End(E)$ from a Bass order\\
  \noindent Input: A Bass order $\Lambda\subseteq \End(E)$ with
  factored reduced discriminant $\prod_{i=1}^n q_i^{e_i}$.\\
  \noindent Output: A compact representation of $\End(E)$.
  \begin{enumerate}
    \item For each $i=1$ to $n$:
    \begin{enumerate}
    \item Compute all orders $\{\OO_{i,1},\ldots,\OO_{i,m_i}\}$
    which are maximal at $q_i$ and equal to $\Lambda$ at primes
    $q'\neq q_i$ by running
    Algorithm~\ref{alg: enumerate q-maximal} with input $\Lambda$ and
    prime $q_i$.
  \end{enumerate}

  \item Compute $f\colon \Lambda \otimes \QQ \to B_{p,\infty}$
    \item\label{step: rand O} For each choice of indices
      $(i_1,\ldots,i_n) \in [m_1]\times \cdots \times [m_n]$:
    \begin{enumerate}

  \item\label{step: create O} Set $\OO \coloneqq  \OO_{1,i_1} + \cdots + \OO_{n,i_n}$.
  \item\label{step: check O}  Compute $E'/\FF_{p^2}$ such that $\End(E')\simeq f(\OO)$ along with a compact representation of $\End(E')$.
    \item If $j(E') = j(E)$ or $j(E') = j(E)^p$, return $f(\OO)$ and the compact representation of $\End(E')$.
  \end{enumerate}
        \end{enumerate}

    \end{alg}

\begin{prop}\label{numberofsuperorders}
  Fix a positive integer $k$, and let $\Lambda$ be a Bass order whose
  size is polynomial in $\log p$ and whose reduced discriminant is
  square-free and of size $O(p^k)$.  Assume that the factorization of
  the reduced discriminant is given.  There are ${O}(p^{\epsilon})$
  maximal orders containing $\Lambda$ and Algorithm~\ref{alg:
    enumerate and check} terminates in time $\tilde{O}(p^{\epsilon})$
  for any $\epsilon >0$, assuming that the heuristics in~\cite{GPS,
    EHLMP} hold.
\end{prop}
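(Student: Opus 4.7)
The plan is to prove the two assertions separately: first bound the number of maximal orders containing $\Lambda$, and then bound the per-order work inside Algorithm~\ref{alg: enumerate and check}.

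For the count of maximal superorders, I would invoke the Remark following Algorithm~\ref{alg: enumerate q-maximal}: the number of global maximal orders containing a Bass $\ZZ$-order with factored reduced discriminant $\discrd(\Lambda)=\prod_{i=1}^n q_i^{e_i}$ is exactly $\prod_{i=1}^n(e_i+1)$, by the local-global principle for orders together with the local count from Proposition~\ref{prop: enumerate local}. Since $\discrd(\Lambda)$ is assumed square-free, every $e_i=1$, so this product collapses to $2^n$, where $n=\omega(\discrd(\Lambda))$ is the number of distinct prime divisors. The classical bound $\omega(N)=O(\log N/\log\log N)$ together with $\discrd(\Lambda)=O(p^k)$ gives $n \leq C\log p/\log\log p$ for some constant $C$ depending only on $k$. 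Hence $2^n \leq p^{C/\log\log p}$, and since $C/\log\log p \to 0$ as $p\to\infty$, this is $O(p^{\epsilon})$ for any fixed $\epsilon>0$ and all sufficiently large $p$.

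For the running time, the outer loop of Algorithm~\ref{alg: enumerate and check} has $\prod_{i=1}^n m_i \leq 2^n = O(p^{\epsilon})$ iterations by the same count. The setup is cheap: each of the $n$ calls to Algorithm~\ref{alg: enumerate q-maximal} runs in time polynomial in $\log p$ by the preceding lemma, and computing the embedding $f\colon \Lambda\otimes\QQ \to B_{p,\infty}$ is also polynomial in $\log p$. Inside each iteration, forming $\OO=\OO_{1,i_1}+\cdots+\OO_{n,i_n}$ is a sum of four-dimensional $\ZZ$-lattices, computable via lattice reduction in polynomial time, while the final $j$-invariant comparison is immediate.

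The main obstacle, and the only place where the heuristics of~\cite{GPS,EHLMP} are invoked, is Step~\ref{step: check O}: given a maximal order $f(\OO) \subseteq B_{p,\infty}$, produce a supersingular curve $E'/\FF_{p^2}$ with $\End(E') \simeq f(\OO)$ together with a compact representation of $\End(E')$. This is a form of the constructive Deuring correspondence, realized in time polynomial in $\log p$ by the algorithms of~\cite{GPS,EHLMP} under their stated heuristics. Checking both $j(E') = j(E)$ and $j(E') = j(E)^p$ in the final test is necessary because the endomorphism ring determines the supersingular $j$-invariant only up to Frobenius conjugation. Multiplying the $O(p^{\epsilon})$ bound on the iteration count by the $\mathrm{polylog}(p)$ per-iteration cost yields the claimed $\tilde O(p^{\epsilon})$ total running time.
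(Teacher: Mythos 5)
Your proposal is correct and follows essentially the same route as the paper: bound the number of maximal superorders by $2^{\omega(\discrd\Lambda)}$ via the local-global principle and the local Bass count, apply $\omega(N)=O(\log N/\log\log N)$ to get $O(p^{\epsilon})$, and charge each loop iteration a $\mathrm{polylog}(p)$ cost for the constructive Deuring correspondence under the heuristics of~\cite{GPS,EHLMP}. The only (harmless) difference is that the paper's count is $2^{\omega(\discrd\Lambda)-1}$, since the ramified prime $p$ contributes a unique local maximal order rather than two; your $2^{n}$ is still a valid upper bound and the asymptotics are unchanged.
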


\begin{proof}
  Computing the isomorphism
  $f\colon \Lambda\otimes \QQ \simeq B_{p,\infty}$ requires one call
  to an algorithm for factoring integers (and poly $\log p$ calls to
  algorithms for factoring polynomials over $\FF_p$, see
  ~\cite{IRS12}).  Let $\discrd(\Lambda) = p\cdot \prod_{i=1}^m q_i$
  with $q_1, \dots, q_m$ distinct and different from $p$. By the
  local-global principle for maximal orders there is one maximal order
  corresponding to each collection of $q_i$-maximal orders $\{\OO_i\}$
  with $\OO_i \supseteq \Lambda\otimes \ZZ_{q_i}$. We loop through
  these orders in Step~(\ref{step: rand O}). The size of the index set
  in that loop and hence the number of distinct maximal orders
  containing $\Lambda$ is at most $2^{\omega(\discrd(\Lambda))-1}$,
  where $\omega(n)$ denotes the number of distinct prime factors of an
  integer $n$. Fix $\epsilon > 0$. Since
  $\omega(n) = O(\frac{\log n}{\log\log n})$~\cite[Ch. 22,
  \S10]{HW08}, for $p$ large enough, the number of maximal orders
  $\OO\supseteq\Lambda$ is at most
  $2^{c'\frac{\log c\cdot p^k}{\log\log c \cdot p^k}}= (c \cdot p^k
  )^{\frac{c'}{\log \log c\cdot p^k}}$ for some $c,c' >0$, which is
  $O(p^{\epsilon})$.

  As we loop through the maximal orders $\OO$
    containing $\Lambda$, we check each one to see if it is isomorphic to
    $\End(E)$: after constructing such an order in \ref{step: rand
      O}(a), we compute in \ref{step: rand O}(b) a curve $E'$ whose
    endomorphism ring is isomorphic to $\OO$. This can be solved
    efficiently with the algorithms in to~\cite{GPS}: one computes a
    connecting ideal $I$ between $\OO$ and a special order ${\OO}'$
    and then applies Algorithm 2
    (see also Algorithm 12 of~\cite{EHLMP}). Then, in Step 3(c), we
    compare $j$-invariants. Checking each order takes time polynomial
    in $\log p$ (assuming the heuristics in~\cite{GPS, EHLMP}), so the
    total running time of the algorithm is $\tilde{O}(p^{\epsilon})$
    for any $\epsilon >0$.
\end{proof}

Our computational data from Section~\ref{computation} on the factorization pattern of the reduced discriminant of $\Lambda$ below suggest
that we will get the same running time when the reduced discriminant
of $\Lambda$ is not square-free. This motivates the following
conjecture:

\begin{conjecture}\label{conjecture: NLambda}
  Fix an integer $k\geq 0$ and assume that $\Lambda\subseteq\End(E)$
  is a Bass order of size polynomial in $\log p$ and with
  $\discrd(\Lambda) = O(p^k)$. Then for any $\epsilon >0$,
  the number of maximal orders containing $\Lambda$ is
  ${O}(p^{\epsilon})$.
\end{conjecture}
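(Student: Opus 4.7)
The plan is to extend the proof of Proposition~\ref{numberofsuperorders} beyond the square-free case by replacing the bound $2^{\omega(\discrd(\Lambda))}$ with the full divisor-function bound $d(\discrd(\Lambda))$, and then invoking the classical estimate $d(n) = O(n^\delta)$ for any $\delta > 0$ to conclude.

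Three facts combine. First, being Bass is a local property \cite[Theorem 1.2]{CSV}, so $\Lambda \otimes \ZZ_q$ is a local Bass $\ZZ_q$-order for every prime $q$. Second, at each prime $q \neq p$ dividing $\discrd(\Lambda)$, Brzezinski's bound \cite{Brz83} gives at most $e_q + 1$ local maximal orders containing $\Lambda \otimes \ZZ_q$ in $B_{p,\infty} \otimes \QQ_q \cong M_2(\QQ_q)$, where $e_q \coloneqq v_q(\discrd(\Lambda))$; at $p$ the algebra $B_{p,\infty} \otimes \QQ_p$ is a division algebra and therefore has a unique maximal order; at primes $q \nmid \discrd(\Lambda)$ the order $\Lambda \otimes \ZZ_q$ is itself maximal. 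Third, by the local-global principle for maximal orders \cite[Lemma 10.4.2]{Voight}, each global maximal order containing $\Lambda$ is determined by the tuple of its local completions, and every compatible tuple arises this way.

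Therefore the number of maximal orders containing $\Lambda$ satisfies
$$N_\Lambda \;\leq\; \prod_{q \neq p}(e_q + 1) \;\leq\; d(\discrd(\Lambda)).$$
Combining this with the hypothesis $\discrd(\Lambda) = O(p^k)$ and the classical divisor estimate yields $N_\Lambda = O(p^{k\delta})$, which becomes $O(p^\epsilon)$ upon taking $\delta = \epsilon/k$.

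The principal obstacle is conceptual rather than technical: the sketch above appears to establish the conjecture in full, using exactly the ingredients already deployed in Proposition~\ref{numberofsuperorders}. If the conjecture is genuinely open, it is presumably because the authors intend a strictly sharper statement, such as a polylogarithmic bound in $p$, a bound with explicit constants uniform over Bass orders arising from the cycle-finding procedure, or an average-case statement about the typical count (e.g.\ expected over the randomness in Algorithm~\ref{alg: compute bass}) rather than a worst-case bound. Any substantive strengthening beyond $O(p^\epsilon)$ along these lines would require controlling the distribution of the prime-power exponents in $\discrd(\Lambda)$ for Bass orders produced by Algorithm~\ref{alg: compute bass}, which is precisely the statistical phenomenon the authors' numerical experiments are designed to probe.
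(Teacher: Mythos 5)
The paper contains no proof of this statement: it is left as a conjecture, supported only by the square-free case (Proposition~\ref{numberofsuperorders}) and by the numerical data in Section~\ref{computation}. So there is nothing to compare your argument against --- rather, if it is sound, it closes the conjecture. As far as I can tell it is sound, and every ingredient is already in the paper. Bassness is a local property, so each completion $\Lambda\otimes\ZZ_q$ is a local Bass order; Proposition~\ref{prop: enumerate local} (resting on Brzezi\'nski) gives at most $e_q+1$ maximal orders containing $\Lambda\otimes\ZZ_q$ in the split case, the completion at $p$ lies in a division algebra with a unique maximal order, and the local-global principle injects the set of global maximal superorders into the product of the local sets. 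Hence $N_\Lambda\le\prod_q(e_q+1)\le d(\discrd(\Lambda))$, and the classical divisor bound $d(n)=O(n^{\delta})$ for every $\delta>0$ (the same Hardy--Wright estimate~\cite{HW08} in a marginally stronger form than the $\omega(n)$ bound used in the square-free case) finishes with $\delta=\epsilon/k$. The one point where your argument genuinely goes beyond Proposition~\ref{numberofsuperorders} is the reliance on the $e+1$ bound for local Bass orders with $e\ge 2$: when $\discrd(\Lambda)$ is square-free each localization is hereditary, hence Eichler of level $q$, and the count of $2$ per prime is elementary, whereas for $e\ge 2$ one needs the full Bass/basic classification (residually split orders are Eichler and correspond to a path of $e+1$ vertices in the Bruhat--Tits tree; residually inert or ramified ones contain the maximal order of a quadratic field extension of $\QQ_q$, whose unit group fixes at most two vertices). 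Since the paper itself asserts and uses exactly that bound in Section~\ref{section: local superorders}, your proof is consistent with the paper's own framework; you should state explicitly that global Bass implies local Bass at every prime (via \cite{CSV}), but I see no gap. Your closing speculation is apt: the statement as literally written follows from the divisor bound, so whatever remains genuinely open must be a sharper or distributional claim about $\prod_i(e_i+1)$ for the orders actually produced by Algorithm~\ref{alg: compute bass}.
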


\begin{thm}\label{thm: compute EndE}
Assume GRH, Conjecture~\ref{conjecture: NLambda},
Heuristic~\ref{heuristic}, and the heuristics
in~\cite{GPS}.  Let $E$ be a supersingular
elliptic curve. Then the algorithm which combines Algorithm~\ref{alg: compute bass} and Algorithm~\ref{alg: enumerate and check} computes $\End(E)$
with probability at least $\mu$, in time $O((\log p)^2\sqrt{p})$.
\end{thm}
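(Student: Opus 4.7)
The plan is to chain together the two algorithms and control both the success probability and the running time via the results already established in this section. First, run Algorithm~\ref{alg: compute bass} on $E$. By Theorem~\ref{thm: compute bass}, with probability at least $\mu$ this outputs a Bass order $\Lambda \subseteq \End(E)$ (after identifying $\End(E)$ with a maximal order of $B_{p,\infty}$ via a fixed isomorphism) together with the factorization of $\discrd(\Lambda)$, and it does so in time $O(\sqrt{p}(\log p)^2)$. Moreover, because the two cycles produced by Algorithm~\ref{alg: find end} have length $O(\log p)$, the corresponding endomorphisms $\alpha,\beta$ have norms $O(p^c)$ for some absolute constant $c$; hence $\Lambda = \langle 1,\alpha,\beta,\alpha\beta\rangle$ has size polynomial in $\log p$ and $\discrd(\Lambda) = O(p^k)$ for a fixed $k$, which are exactly the hypotheses needed to invoke Conjecture~\ref{conjecture: NLambda}.

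Next, feed $\Lambda$ and its factored reduced discriminant into Algorithm~\ref{alg: enumerate and check}. Since $\End(E)$ is a maximal superorder of $\Lambda$, it appears (up to conjugation) in the enumeration produced by combining the local outputs of Algorithm~\ref{alg: enumerate q-maximal} via the local--global principle for maximal orders. For each candidate $\OO$ in the enumeration, Step~(3)(b) uses the algorithms of~\cite{GPS} (via a connecting ideal to a fixed special order) to compute a curve $E'/\FF_{p^2}$ with $\End(E') \simeq f(\OO)$, and Step~(3)(c) compares $j(E')$ to $j(E)$ and to $j(E)^p$; the latter is needed because a supersingular curve and its Frobenius conjugate have isomorphic endomorphism rings, so the $j$-invariant recovered from the GPS curve-construction algorithm only determines the isomorphism class of $\End(E)$ up to this twist. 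When the enumeration hits the correct maximal order, one of the two equalities must hold, at which point we return a compact representation of $\End(E)$.

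For the running time, invoking Proposition~\ref{numberofsuperorders} together with Conjecture~\ref{conjecture: NLambda} and the heuristics of~\cite{GPS,EHLMP} shows that Algorithm~\ref{alg: enumerate and check} terminates in time $\tilde{O}(p^{\epsilon})$ for every $\epsilon > 0$: the enumeration has $O(p^{\epsilon})$ candidates and each candidate is checked in time polynomial in $\log p$. Choosing any $\epsilon < 1/2$, this cost is absorbed into the $O(\sqrt{p}(\log p)^2)$ cost of Algorithm~\ref{alg: compute bass}, giving total time $O(\sqrt{p}(\log p)^2)$. The overall success probability is at least $\mu$, inherited directly from Theorem~\ref{thm: compute bass}, since conditional on Algorithm~\ref{alg: compute bass} returning a Bass suborder of $\End(E)$, Algorithm~\ref{alg: enumerate and check} deterministically finds $\End(E)$.

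The main subtlety, rather than a genuine obstacle, is the appeal to Conjecture~\ref{conjecture: NLambda}: Proposition~\ref{numberofsuperorders} only bounds the number of maximal superorders in the square-free case, whereas the $\Lambda$ produced by Algorithm~\ref{alg: compute bass} can a priori have a non-square-free reduced discriminant. Controlling the branching in the local enumeration (equivalently, controlling the $e_i+1$ factors coming from the Bruhat--Tits analysis of Section~\ref{section: local superorders}) for general Bass orders is where the conjecture is essential; everything else in the proof is a direct composition of already-established results.
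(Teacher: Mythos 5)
Your proposal is correct and follows essentially the same route as the paper: bound the norms (hence discriminants) of the cycle endomorphisms to see that $\Lambda$ satisfies the hypotheses of Conjecture~\ref{conjecture: NLambda}, then combine Theorem~\ref{thm: compute bass} with Proposition~\ref{numberofsuperorders} so that the $\tilde O(p^{\epsilon})$ enumeration cost is absorbed into the $O(\sqrt{p}(\log p)^2)$ cycle-finding cost, with the success probability $\mu$ inherited from the Bass-order step. The paper's proof is terser (it cites Equation~\ref{eqn: discrd bound} explicitly for the bound $\discrd(\Lambda)=O(p^{2c})$), but your additional remarks --- on why both $j(E)$ and $j(E)^p$ are checked, and on why the conjecture rather than Proposition~\ref{numberofsuperorders} alone is needed when $\discrd(\Lambda)$ is not square-free --- are accurate elaborations of the same argument.
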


\begin{proof}
By the proof of Theorem~\ref{thm: compute bass}, the norms of the
endomorphisms $\alpha_1, \alpha_2$ computed by Algorithm~\ref{alg: find
  end} are bounded by $p^c$ for some constant $c$ independent of $p$,
so their discriminants satisfy
$|\Delta(\alpha_i)| < 4p^{c}$. Hence by Equation~\ref{eqn: discrd bound}, they generate an order $\Lambda$ whose reduced discriminant satisfies $\discrd(\Lambda) = O(p^{2c})$. This means we can apply Conjecture~\ref{conjecture: NLambda}, so the theorem follows from Theorem~\ref{thm: compute bass}.
\end{proof}

\subsection{Computational Data}\label{computation} We implemented a cycle finding algorithm in Sage along with an
algorithm for computing traces of cycles in $G(p,\ell)$. For each $p$
in Figure~1, and for $100$ iterations, we computed
a pair of cycles in $G(p,2)$. We then tested whether they generate a
Bass order by testing whether the two quadratic orders had coprime
conductors and computed the discriminant of the order that they
generate. We also computed an upper bound on the number of maximal
orders containing $\Lambda$ when $\Lambda$ was Bass: suppose
$\discrd(\Lambda) = p \prod_i q_i^{e_i}$, then there are at most
$N(\Lambda)\coloneqq \prod_i (e_i + 1)$ maximal orders containing
$\Lambda$.  We report how often the two cycles generated an order, how
many of those orders were Bass, and the average value of
$N(\Lambda)$. The cycle-finding algorithm we
implemented is the variant discussed in Remark~\ref{variant}: it searches
for $j\in \FF_p$ to construct the cycles using walks of length $\lceil
\log p \rceil$. We also did not
avoid a second cycle which may commute with the first since even
without that more than 80\% of cases were orders. We also
only computed cycles at $j\in \FF_{p^2} -\FF_p$ because this
is the case of interest as there are no obvious non-integer
endomorphisms.

\begin{figure}\label{fig: bass data}
  \centering
\begin{tabular}{|r | c | c | c | c}
  \hline
$p$ & orders & Bass orders & average $N(\Lambda)$ \\ \hline
%1,009 & 86  & 62 & 82.60  \\ \hline
%2,003 & 86 & 65 & 68.99 \\ \hline
%3,001 & 82 & 78 & 50.08 \\ \hline
%10,007 & 90 & 71 & 64.03 \\ \hline
% 20,011 & 90 & 74 & 159.97 \\ \hline
30,011 & 90 & 75 & 122.37 \\ \hline
50,021 & 89 & 69 & 56.07 \\ \hline
70,001 & 92 & 76 & 122.21 \\ \hline  % 5 hrs
90,001 & 80 & 67 & 322.04 \\ \hline % 6.75 hrs
100,003 & 81 & 75 & 337.59 \\ \hline
% 3001, 200 times, 164, 146, 4008/73 2.5 hrs
\end{tabular}
\caption{Results from computing 100 pairs of cycles in $G(p,2)$ at random $j \in \FF_{p^2}- \FF_p$.}
\end{figure}

\section{Computing $\End(E)$ via pathfinding in
  the $\ell$-isogeny graph }
\label{section: reduction}
In this section, we give a reduction from the endomorphism ring
problem to the problem of computing $\ell$-power isogenies in
$G(p,\ell)$, using ideas from~\cite{KLPT}, \cite{GPS}, and
\cite{EHLMP}.  This reduction is simpler than the one in~\cite{EHLMP},
and uses only one call to a pathfinding oracle (rather than poly
$\log p$ calls to an oracle for cycles in $G(p,\ell)$, as
in~\cite{EHLMP}). We apply this reduction in two ways, noting that it
gives an algorithm for computing the endomorphism ring, and that it
breaks second preimage resistance of the variable-length version of
the hash function in~\cite{CGL2009}.

\subsection{Reduction from computing $\End(E)$ to pathfinding in the
  $\ell$-isogeny graph.}
\label{ss: reduction}

We first define the pathfinding problem in the supersingular
$\ell$-isogeny graph $G(p,\ell)$:
\begin{prob}[$\ell$-PowerIsogeny]\label{EllIsogeny}
  Given a prime $p$, along with two supersingular elliptic curves $E$
  and $E'$ over $\FF_{p^2}$, output an isogeny from $E$ to $E'$
  represented as a chain of $\ell$-isogenies of length $k$ with $k$
  polynomial in $\log p$.
\end{prob}

Computing the endomorphism ring of a supersingular elliptic curve via
an oracle for $\ell$-PowerIsogeny proceeds as follows. On input $p$, Algorithm~3
of~\cite{EHLMP} returns a  supersingular elliptic curve $\tilde{E}$
defined over $\FF_{p^2}$ and a maximal order
$\tilde{\OO} \subseteq B_{p, \infty}$ with an explicit $\ZZ$-basis
$\{x_1, \ldots, x_4\}$. Proposition~3 of \cite{EHLMP} gives an
explicit isomorphism $g\colon \tilde{\OO}\to
\End(\tilde{E})$ with the property that we can efficiently evaluate $g(x_i)$
at points of $E_0$.  From this, the endomorphism ring of any
supersingular elliptic curve $E$ defined over $\FF_{p^2}$ can be
computed, given a path in $G(p,\ell)$ from
$\tilde{E}$ to $E$, with $\ell\not=p$ a small prime.

The following algorithm gives a polynomial time reduction from
computing endomorphism rings to the path-finding problem, which uses only one call to the pathfinding oracle. It assumes the
  heuristics of~\cite{GPS} and GRH (to compute $\tilde{E}$):

    \begin{alg}\label{alg: end from ell path} Reduction from computing $\End(E)$ to $\ell$-PowerIsogeny\\
  \noindent
  Input: Prime $p$, $E/\FF_{p^2}$ supersingular.
 \\
  \noindent Output: A maximal order $\OO \simeq\End(E)$, whose
  elements can be
  evaluated at any point of $E$, and a powersmooth isogeny $\psi_e: \tilde{E}\to E$, with $\tilde{E}$ as above.
  \begin{enumerate}
  \item Compute $\tilde{E},\tilde{\OO}$ with Algorithm 3 in \cite{EHLMP}.
      \item Run the oracle for pathfinding on $\tilde{E}, E$ to obtain an
        $\ell$-power isogeny $\phi=\phi_e\circ\dots\circ\phi_1:
        \tilde{E}\rightarrow E$ of degree $\ell^e$.
  \item Let $J_0 \coloneqq  \tilde{\OO}$, $P_0 \coloneqq
    \tilde{\OO}$, $\OO_0:= \tilde{\OO}$.
  \item for $k\coloneqq 1,\ldots, e$:
    \begin{enumerate}

    \item\label{step: kernel ideal} Compute $I_k\subseteq \OO_{k-1}$,
      the kernel ideal of $\phi_k$.
    \item\label{step: new connecting ideal} Compute
      $J_k\coloneqq J_{k-1}I_k$.
    \item\label{step: powersmooth ideal} Compute $P_k$, an ideal
      equivalent to $J_k$ of powersmooth norm.
    \item \label{step: afterpsi} Compute an isogeny
      $\psi_k:\tilde{E} \to E_k$ corresponding to $P_k$.
      \item Set $\OO_{k}\coloneqq  \OO_R(P_k)$.
      \end{enumerate}
    \item Return $\OO_R(P_{e}), \psi_e$.
    \end{enumerate}

  \end{alg}

  Proof sketch for correctness of reduction and running time: The
  kernel ideal ideal $I_k$, which is the ideal of $\OO_{k-1}$ of norm $\ell$
  corresponding to $\phi_k$, can be computed in polynomial time. This
  uses the fact that we can evaluate endomorphisms efficiently using
  Proposition~3 of \cite{EHLMP}. The ideal $J_k$ corresponds to
  $\psi_k: \tilde{E} \to E_k$. The algorithm is correct because at the
  $e$-th step we have $\OO_R(P_e)=\OO_R(J_e)=\End(E_e)=\End(E)$.

  \subsection{Using Algorithm~\ref{alg: end from ell
      path} to compute endomorphism rings and break second preimage of
      the CGL hash}\label{computebyreduction}
    Algorithm~\ref{alg: end from ell path} can be used to give an
    algorithm for computing the endomorphism ring of a supersingular
    elliptic curve $E$ by combining it with algorithms
    from~\cite{DG16, GPS, EHLMP}. This yields a $O((\log p)^2p^{1/2})$
    time algorithm with polynomial storage, assuming the relevant
    heuristics in~\cite{GPS,EHLMP}.

    We now consider the hash function in~\cite{CGL2009} constructed
    from Pizer's Ramanujan graphs $G(p,2)$. For each supersingular
    elliptic curve $\tilde E$, there is an associated hash function.
    The input to the hash function is a binary number of $k$ digits,
    and from this one computes a sequence of $k$ 2-isogenies, starting
    at $\tilde E$, whose composition maps to some other supersingular
    curve $E$. The $j$-invariant of $E$ is the output of the hash
    function. The following is an improvement over~\cite{EHLMP}, which
    gave a collision attack on the CGL hash for this specific hash
    function.

\begin{prop}
  Let $\tilde{E}$ be the elliptic curve computed in Step (1) of
  Algorithm~\ref{alg: end from ell path}. For the hash function
  associated to $\tilde{E}$, Algorithm~\ref{alg: end from ell path} gives
  a second preimage attack (and hence, also a collision attack)
  % on the CGL hash
  that runs in time polynomial in $\log p$.
\end{prop}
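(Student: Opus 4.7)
The plan is to observe that, for the hash function associated to $\tilde{E}$, the hash computation itself supplies the path that Algorithm~\ref{alg: end from ell path} would otherwise request from its pathfinding oracle. Given an input $m$ of length $k$, evaluating the variable-length CGL hash on $m$ produces the hash value $E \coloneqq H_{\tilde{E}}(m)$ together with an explicit chain of $2$-isogenies $\phi_m = \phi_k \circ \cdots \circ \phi_1\colon \tilde{E}\to E$, namely the non-backtracking walk on $G(p,2)$ determined by $m$. I would therefore skip the oracle call in Step~(2) of Algorithm~\ref{alg: end from ell path} and feed $\phi_m$ directly into Steps~(3)--(5). By the correctness argument sketched for that algorithm, this yields a maximal order $\OO \simeq \End(E)$ together with a connecting left $\tilde{\OO}$-ideal $J_k$ of norm $2^k$, all in time polynomial in $\log p$ under the heuristics of~\cite{GPS, EHLMP}.

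With $\End(E)$ and the connecting ideal $J_k$ in hand, I would next invoke the $\ell$-power variant of KLPT from~\cite{KLPT, GPS, EHLMP} with $\ell = 2$ to produce an equivalent left $\tilde{\OO}$-ideal $I'$ whose norm is a power of $2$ different from $2^{k}$. Translating $I'$ back to an isogeny $\phi'\colon \tilde{E}\to E$ via the ideal-to-isogeny routine of~\cite{GPS} then gives a different chain of $2$-isogenies from $\tilde{E}$ to $E$, and reading off the bits that record which child of each intermediate vertex $\phi'$ selects produces a candidate second preimage $m'$ with $H_{\tilde{E}}(m') = E$.

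The technical obstacle I expect is ensuring that $\phi'$ is both a valid hash input (no backtracking) and genuinely distinct from $\phi_m$. The non-backtracking condition can be enforced by trimming $\phi'$ in the sense of Definition~\ref{def: trimming} and, if the trimmed walk has changed length, adjusting the target norm of $I'$ or rerunning KLPT with fresh randomness until a non-backtracking representative emerges. Distinctness from $m$ is automatic when the length of $\phi'$ differs from $k$, which can be forced by choosing the target exponent for $I'$; exploiting the variable-length nature of the hash is what makes this step clean. Since each ingredient (the modified Algorithm~\ref{alg: end from ell path}, the KLPT call, the ideal-to-isogeny translation, and the trimming) runs in time polynomial in $\log p$ under the stated heuristics, so does the overall second-preimage attack, which immediately also yields a collision attack by hashing any input and computing a second preimage.
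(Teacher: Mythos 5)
Your proposal is correct and follows essentially the same route as the paper: use the given path from $\tilde{E}$ to $E$ (here supplied by the hash evaluation itself) to compute $\End(E)$ via Algorithm~\ref{alg: end from ell path}, then generate a new $2$-power path to $E$ using the randomized KLPT-based connecting-ideal and ideal-to-isogeny machinery of~\cite{KLPT,GPS,EHLMP}. Your extra attention to non-backtracking and distinctness of the second preimage is a reasonable elaboration of details the paper's proof leaves implicit.
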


\begin{proof}
The attack works as follows:
Given a path from $\tilde{E}$ to $E$, use Algorithm~\ref{alg: end from ell
 path} above to compute $\End(E)$. Then use Algorithm~7
of~\cite{EHLMP} to compute new paths from $\tilde{E}$ to $E$. We note that
Algorithm~7 uses the main algorithm of~\cite{KLPT} to compute a
connecting ideal of $\ell$-power norm, whose output can be
randomized. Then for each such ideal, a corresponding path also hashes
to $j(E)$. The running time of these algorithms is polynomial in $\log
p$.
\end{proof}

\begin{rmk}
  When a start vertex $E'\neq \tilde{E}$ is chosen, the resulting hash
  function might still admit a second preimage attack if $E'$ was
  obtained by choosing a path of $\log p$ from $\tilde{E}$ to $E'$ so that
  the endomorphism ring of $E'$ is known.
  \end{rmk}

\section{Acknowledgements}
We would like to thank Daniel Smertnig and John Voight for several helpful
discussions and suggestions. We thank Ben Diamond for alerting us that an
algorithm similar to Algorithm \ref{alg: end from ell path} in
Section~\ref{section: reduction} already appeared in~\cite{dFMPS}. Finally we
would like to thank an anonymous reviewer of a previous version of this paper
whose suggestions greatly simplified Section~\ref{section: local superorders}.

% \bibliography{cryptoprojectbib.bib, Ref.bib}

\begin{thebibliography}{10}

\bibitem{ACLLNSS}
Sarah Arpin, Catalina Camacho-Navarro, Kristin Lauter, Joelle Lim, Kristina
  Nelson, Travis Scholl, and Jana Sot\'{a}kov\'{a}.
\newblock Adventures in supersingularland.
\newblock Preprint, 2019.
\newblock arxiv:1909.07779.

\bibitem{SIKE}
Reza Azarderakhsh, Matthew Campagna, Craig Costello, Luca~De Feo, Basil Hess,
  Amir Jalali, David Jao, Brian Koziel, Brian LaMacchia, Patrick Longa, Michael
  Naehrig, Joost Renes, Vladimir Soukharev, and David Urbanik.
\newblock Supersingular isogeny key encapsulation.
\newblock Submission to the NIST Post-Quantum Standardization project, 2017.
\newblock
  \url{https://csrc.nist.gov/Projects/Post-Quantum-Cryptography/Round-1-Submissions}.

\bibitem{BCEMP}
Efrat Bank, Catalina Camacho-Navarro, Kirsten Eisentr\"ager, Travis Morrison,
  and Jennifer Park.
\newblock Cycles in the supersingular $\ell$-isogeny graph and corresponding
  endomorphisms.
\newblock {\em Proceedings of the Women in Numbers 4 Conference}, To appear in
  WIN 4 proceedings, 2019.
\newblock arxiv:1804.04063.

\bibitem{BBEL}
Juliana Belding, Reinier Br\"{o}ker, Andreas Enge, and Kristin Lauter.
\newblock Computing {H}ilbert class polynomials.
\newblock In {\em Algorithmic number theory}, volume 5011 of {\em Lecture Notes
  in Comput. Sci.}, pages 282--295. Springer, Berlin, 2008.

\bibitem{BS2011}
Gaetan Bisson and Andrew~V. Sutherland.
\newblock Computing the endomorphism ring of an ordinary elliptic curve over a
  finite field.
\newblock {\em J. Number Theory}, 131(5):815--831, 2011.

\bibitem{Brz83}
Juliusz. Brzezi\'{n}ski.
\newblock On orders in quaternion algebras.
\newblock {\em Comm. Algebra}, 11(5):501--522, 1983.

\bibitem{CSIDH}
Wouter Castryck, Tanja Lange, Chloe Martindale, Lorenz Panny, and Joost Renes.
\newblock C{SIDH}: an efficient post-quantum commutative group action.
\newblock In {\em Advances in cryptology---{ASIACRYPT} 2018. {P}art {III}},
  volume 11274 of {\em Lecture Notes in Comput. Sci.}, pages 395--427.
  Springer, Cham, 2018.

\bibitem{CSV}
Sara Chari, Daniel Smertnig, and John Voight.
\newblock On basic and {B}ass quaternion orders.
\newblock Preprint, 2019.
\newblock arXiv:1903.00560.

\bibitem{CGL2009}
Denis~X. Charles, Eyal~Z. Goren, and Kristin Lauter.
\newblock Cryptographic hash functions from expander graphs.
\newblock {\em J. Cryptology}, 22(1):93--113, 2009.

\bibitem{DG16}
Christina Delfs and Steven~D. Galbraith.
\newblock Computing isogenies between supersingular elliptic curves over
  $\mathbb{F}_p$.
\newblock {\em Des. Codes Cryptography}, 78(2):425--440, February 2016.

\bibitem{EHLMP}
Kirsten Eisentr{\"a}ger, Sean Hallgren, Kristin Lauter, Travis Morrison, and
  Christophe Petit.
\newblock Supersingular isogeny graphs and endomorphism rings: reductions and
  solutions.
\newblock {\em Eurocrypt 2018, LNCS 10822}, pages 329--368, 2018.

\bibitem{Elk89}
Noam~D. Elkies.
\newblock Supersingular primes for elliptic curves over real number fields.
\newblock {\em Compositio Math.}, 72(2):165--172, 1989.

\bibitem{dFMPS}
Luca~De Feo, Simon Masson, Christophe Petit, and Antonio Sanso.
\newblock Verifiable delay functions from supersingular isogenies and pairings.
\newblock 2019.
\newblock \url{https://eprint.iacr.org/2019/166}.

\bibitem{GPS}
Steven~D. Galbraith, Christophe Petit, and Javier Silva.
\newblock Identification protocols and signature schemes based on supersingular
  isogeny problems.
\newblock In {\em Advances in cryptology---{ASIACRYPT} 2017. {P}art {I}},
  volume 10624 of {\em Lecture Notes in Comput. Sci.}, pages 3--33. Springer,
  2017.

\bibitem{GZ85}
Benedict Gross and Don Zagier.
\newblock On singular moduli.
\newblock {\em J. Reine Angew. Math.}, 355:191--220, 1985.

\bibitem{HW08}
Godfrey~H. Hardy and Edward~M. Wright.
\newblock {\em An introduction to the theory of numbers}.
\newblock Oxford University Press, Oxford, sixth edition, 2008.

\bibitem{IRS12}
G\'{a}bor Ivanyos, Lajos R\'{o}nyai, and Josef Schicho.
\newblock Splitting full matrix algebras over algebraic number fields.
\newblock {\em J. Algebra}, 354:211--223, 2012.

\bibitem{JDf11}
David Jao and Luca De~Feo.
\newblock Towards quantum-resistant cryptosystems from supersingular elliptic
  curve isogenies.
\newblock In {\em Post-quantum cryptography}, volume 7071 of {\em Lecture Notes
  in Comput. Sci.}, pages 19--34. Springer, Heidelberg, 2011.

\bibitem{JMV09}
David Jao, Stephen~D. Miller, and Ramarathnam Venkatesan.
\newblock Expander graphs based on {GRH} with an application to elliptic curve
  cryptography.
\newblock {\em J. Number Theory}, 129(6):1491--1504, 2009.

\bibitem{Koh96}
David Kohel.
\newblock {\em Endomorphism rings of elliptic curves over finite fields}.
\newblock PhD thesis, University of California, Berkeley, 1996.

\bibitem{KLPT}
David Kohel, Kristin Lauter, Christophe Petit, and Jean-Pierre Tignol.
\newblock On the quaternion {$\ell$}-isogeny path problem.
\newblock {\em LMS Journal of Computation and Mathematics}, 17:418--432, 2014.

\bibitem{LV15a}
Kristin Lauter and Bianca Viray.
\newblock An arithmetic intersection formula for denominators of {I}gusa class
  polynomials.
\newblock {\em Amer. J. Math.}, 137(2):497--533, 2015.

\bibitem{Lit28}
John~E. Littlewood.
\newblock On the class-number of the corpus {$P(\sqrt{-k})$}.
\newblock {\em Proc. London Math. Soc. (2)}, 27(5):358--372, 1928.

\bibitem{M2014}
Ken McMurdy.
\newblock Explicit representations of the endomorphism rings of supersingular
  elliptic curves.
\newblock 2014.

\bibitem{Piz80}
Arnold Pizer.
\newblock An algorithm for computing modular forms on {$\Gamma _{0}(N)$}.
\newblock {\em J. Algebra}, 64(2):340--390, 1980.

\bibitem{AEC}
Joseph~H. Silverman.
\newblock {\em The arithmetic of elliptic curves}.
\newblock Springer, New York, 2009.

\bibitem{Voight}
John Voight.
\newblock {\em Quaternion algebras}.
\newblock Version v.0.9.14, July 7, 2018.

\bibitem{V2013}
John Voight.
\newblock Identifying the matrix ring: algorithms for quaternion algebras and
  quadratic forms.
\newblock {\em Developments in Mathematics}, 31:255--298, 2013.

\end{thebibliography}
 % \bibliographystyle{plain}

\end{document}